\documentclass[a4paper, 10pt]{extarticle}
\usepackage[top=70pt,bottom=70pt,left=75pt,right=77pt]{geometry}

\usepackage{amssymb, amsmath, amsthm, setspace, bbm, prettyref, graphicx, float, subfigure, color, mathrsfs, mathtools}

\usepackage{Lutsko_Style}
\DeclareMathSizes{10}{9}{7}{5}
\usepackage[normalem]{ulem}
\usepackage{soul}

\usepackage{graphicx}

\global\long\def\uab{u}%

\newcommand{\mmod}[1]{{{\,\,\mathrm{mod}\,\,#1}}}

\title{
Correlations of the Fractional Parts of $\alpha n^\theta$
}
 \author{
{\sc Christopher Lutsko
and Niclas Technau
}
}
\date{}

\begin{document}

\maketitle

  \setlength{\abovedisplayskip}{1mm}

\begin{center}
{\large\sl Dedicated to Zeev Rudnick on his $60^{th}$ birthday. }
\end{center}
\vspace{2mm}
  
  \begin{abstract}
  \noindent

  Let $m\geq 3$, we prove that $(\alpha n^\theta \mmod 1)_{n>0}$ has Poissonian $m$-point correlation for all $\alpha>0$, provided $\theta<\theta_m$, where $\theta_m$ is an explicit bound which goes to $0$ as $m$ increases. This work builds on the method developed in Lutsko-Sourmelidis-Technau (2021), and introduces a new combinatorial argument for higher correlation levels, and new Fourier analytic techniques. A key point is to introduce an `extra' frequency variable to de-correlate the sequence variables and to eventually exploit a repulsion principle for oscillatory integrals. Presently, this is the only positive result showing that the $m$-point correlation is Poissonian for such sequences.  
  
  \end{abstract}

  \section{Introduction}
  In the following, let $m\geq 2$ be an integer, and let $f \in C_c^\infty(\R^{m-1})$ be a compactly supported function which can be thought of as a stand-in for the characteristic function of a Cartesian products of compact intervals in $\R^{m-1}$. Let $\Vert \cdot\Vert$ be the distance to the nearest integer, and $[N]:=\{1,\ldots,N\}$ where $N\geq 1$ is a large parameter which is taken to $\infty$. Given a sequence $(x(n)) = (x(n))_{n>0} \subseteq \R/\Z$  we define its \emph{$m$-point correlation}, at time $N$, to be
  \begin{equation}\label{def: m-point correlation function}
    R^{(m)}(N,f) := \frac{1}{N} 
    \sum_{\vect{n} \in [1,N]^m}^\ast 
    f(N\|x(n_1)-x(n_2)\|,N \|x(n_2)-x(n_3)\|, \dots, N\|x(n_{m-1})-x(n_{m})\|),
  \end{equation}
  where $\ds \sum^\ast$ denotes a sum over distinct $m$-tuples. Thus the $m$-point correlation measures how correlated points are on the scale of the average gap between neighboring points (which is $N^{-1}$). We say $(x(n))$ has \emph{Poissonian $m$-point correlation} if
  \begin{align}\label{def: expectation}
    \lim_{N \to \infty} R^{(m)}(N,f) = \int_{\R^{m-1}} f(\vect{x}) \mathrm{d}\vect{x} =: \expect{f} 
    \,\, \mathrm{for\,any\,} f \in C_c^\infty(\R^{m-1}).
  \end{align}
  That is, if the $m$-point correlation converges to the expected value if the sequence was uniformly distributed on the unit interval. The key object in this paper are the dilated monomial sequences:
\begin{equation}\label{def: monomial}
  x(n):=\alpha n^\theta \mmod 1.
\end{equation}
The following is our main result.
    
\begin{theorem}\label{thm:main}
  For any $m\ge 3$ the sequence $(\alpha n^\theta \Mod 1)_{n>0}$ has Poissonian $m$-point correlation for any $0< \theta <  1/(m^2+m-1)$, and any $\alpha >0$. 
    \end{theorem}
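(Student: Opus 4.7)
\noindent\emph{Proof plan.}
The natural approach is to Fourier-analyse the test function $f$ and reduce the problem to bounding multilinear exponential sums
\begin{equation*}
 T(\vect{\ell})=\sum_{\vect{n}\in[N]^m}^{\ast}e\bigl(\alpha(\ell_1 n_1^\theta+\cdots+\ell_m n_m^\theta)\bigr),
 \qquad \ell_1+\cdots+\ell_m=0.
\end{equation*}
Applying Poisson summation (or a Fourier expansion) to each difference $\Vert x(n_j)-x(n_{j+1})\Vert$ and making the linear change of dual variables $\ell_1=k_1$, $\ell_j=k_j-k_{j-1}$, $\ell_m=-k_{m-1}$ gives a representation of the form
\begin{equation*}
 R^{(m)}(N,f)=\frac{1}{N^m}\sum_{\vect{k}\in\Z^{m-1}}\widehat f(\vect{k}/N)\,T(\vect{\ell}(\vect{k}))+O(N^{-c}),
\end{equation*}
where rapid decay of $\widehat f$ effectively truncates the sum to $|k_j|\lesssim N^{1+o(1)}$. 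The zero-frequency term $\vect{k}=\vect 0$ yields exactly $\expect f$ (after removing the negligible diagonal of $[N]^m$), and the problem reduces to showing that the remaining frequencies contribute $o(1)$.

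Next, I would dissect the off-diagonal sum dyadically in the sizes $|\ell_j|$ and group the tuples $\vect\ell$ according to which coordinates vanish. In the \emph{generic} pattern---all $\ell_j$ nonzero and of comparable dyadic size---inclusion--exclusion on the distinctness constraint reduces $T(\vect\ell)$ to a product of univariate exponential sums $S(\beta)=\sum_{n\le N}e(\alpha\beta n^\theta)$. Standard $k$-th derivative / van der Corput estimates, which apply precisely because $\theta$ is very small, furnish a power saving $|S(\beta)|\ll N^{1-\delta(\theta)}$ away from tiny frequencies, and this disposes of the generic regime without difficulty.

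The technical heart of the argument is the \emph{degenerate} patterns, in which several $\ell_j$ vanish or are too small for van der Corput to gain anything; then two or more $n_j$ remain coupled in the phase and naive factorisation fails. Following the strategy advertised in the abstract, I would insert an auxiliary (``extra'') frequency variable---via an additional Fourier or Mellin inversion, or a smooth partition of unity at a carefully chosen scale---to decouple the offending $n_j$. The decoupled expression then becomes an oscillatory integral whose phase enjoys a \emph{repulsion principle}: unless all of its coefficients simultaneously vanish, stationary-phase arguments again deliver cancellation. Balancing the errors across the $2^{O(m)}$ combinatorial patterns and the various dyadic ranges is what forces the constraint $\theta<1/(m^2+m-1)$. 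I expect this degenerate regime---identifying the correct auxiliary variable for each vanishing pattern and verifying the repulsion hypothesis uniformly---to be the principal obstacle, and it is what determines the precise exponent.
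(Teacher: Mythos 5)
The decisive gap is your claim that the generic pattern (all $\ell_j$ nonzero and of comparable size) is ``disposed of without difficulty'' by factorising $T(\vect{\ell})$ into univariate sums $S(\beta)=\sum_{n\le N}e(\alpha\beta n^\theta)$ and applying van der Corput to each factor. Count what absolute values can give: the decay of $\widehat f$ leaves about $N^{m-1+o(1)}$ admissible tuples $\vect{\ell}$ with $\ell_1+\dots+\ell_m=0$, and the normalisation is $N^{-m}$, so even the best conceivable pointwise bound $\abs{S(\beta)}\ll N^{1/2+\varepsilon}$ (square-root cancellation, which derivative tests do not attain uniformly anyway) yields $N^{-m}\cdot N^{m-1}\cdot N^{m/2+\varepsilon}=N^{m/2-1+\varepsilon}$, divergent for every $m\ge 2$. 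No pointwise power saving per factor can close this; one must extract cancellation in the sum over the frequency tuple itself. That is exactly what the paper's machinery is for: the moment representation keeps the auxiliary variable $s$, two applications of the $B$-process shorten both the $n$- and $k$-ranges, and the resulting $s$-integrals with phases $\sum_\ell \pm r_\ell(h_\ell-s)^{1/\theta}$ are beaten by a repulsion principle (a Vandermonde-type lower bound on some derivative) -- and this is needed precisely in your ``generic'' regime, not only in the degenerate patterns to which you relegate the extra variable. The part of your plan labelled routine is the heart of the problem, and it is also where the exponent $1/(m^2+m-1)$ actually comes from.

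Relatedly, your main-term bookkeeping is off. Once you open the distinctness constraint by inclusion--exclusion, the completed nonzero-frequency sum does \emph{not} tend to zero: its diagonal carries a genuine main term, equal in the paper to $\sum_{\cP\in\mathscr{P}_m}\mathbf{E}(f^{\abs{P_1}})\cdots\mathbf{E}(f^{\abs{P_d}})$ over non-isolating partitions, and this must cancel exactly against the coincident-index terms that completion reintroduces; moreover the configurations with some $\ell_j=0$ are handled by induction, using the assumed Poissonian behaviour of the $k$-point correlations for $k<m$. This matching is the content of Proposition \ref{lem:MP = KP} and Lemma \ref{lem:MP = KP non-isolating}, and nothing in your plan plays that role: asserting that the zero-frequency term gives $\expect{f}$ and ``the remaining frequencies contribute $o(1)$'' skips both the partition bookkeeping and the induction on the correlation level. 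Without (i) a mechanism for cancellation over $\vect{\ell}$ and (ii) the combinatorial completion argument, the proposal does not yield the theorem.
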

\begin{remark}\label{rmk: previous work}
The authors and Sourmelidis \cite{LutskoSourmelidisTechnau2021}
recently established that \eqref{def: monomial} has Poissonian 
$2$-point correlation for all $\theta < 14/41$ and all $\alpha >0$.
\end{remark}

 As $m$ increases, the range of $\theta$ decreases. This is to be expected since, for example, the sequence $(n^{1/m})_{n >0}$ does not have Poissonian $m$-point correlations since the $m^{th}$ powers accumulate at $0$. The precise range of $\theta$ in Theorem \ref{thm:main} comes from estimates on exponential sums and oscillatory integrals. If we could achieve square root cancellation in the sums which arise, we would be able to prove Theorem \ref{thm:main} for $\theta < 1/m$. Theorem \ref{thm:main} is thus far from optimal. A more careful analysis using these methods could possibly yield an improved range of $\theta$, but not going beyond $\theta < 1/m$ without significant new ideas. This motivates the following conjecture:

\begin{conjecture}\label{conj:conj}
  For any $m\ge 2$ the sequence $(\alpha n^\theta \mod 1)_{n>0}$ has Poissonian $m$-point correlation for any $0 <\theta < 1/m$ and any $\alpha >0$.
\end{conjecture}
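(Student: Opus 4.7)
The plan is to work within the Fourier-analytic framework of the present paper while upgrading the exponential-sum estimates to the square-root-cancellation strength that the discussion preceding the conjecture identifies as the precise obstruction to reaching $\theta < 1/m$. After opening up $R^{(m)}(N,f)$ via Fourier inversion and Poisson summation in all $m$ variables, the problem reduces to showing that, outside of the diagonal main term producing $\langle f \rangle$, the off-diagonal contributions are $o(1)$ as $N\to\infty$. These contributions are controlled by oscillatory sums of the shape
\[
S(\vect{k}) \;=\; \sum_{n \asymp N} e\!\left( \alpha n^\theta P(\vect{k}) + Q(\vect{k},n) \right),
\]
where $\vect{k}\in\Z^{m-1}$ is the Poisson frequency vector and $P,Q$ arise from stationary-phase analysis of the Fourier integrals. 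The admissible range of $\theta$ is then dictated by how much cancellation one can extract from a suitably weighted sum of $|S(\vect{k})|$ over the relevant frequency box.

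My first step would be to refine the combinatorial argument introduced in this paper so that the weighted sum over $\vect{k}$ is majorised by a Vinogradov-type mean value attached to the monomial curve $(t^\theta, t^{2\theta}, \ldots, t^{(m-1)\theta})$, of the form
\[
J_{s,m}(N) \;=\; \int_{[0,1]^{m-1}} \Bigl| \sum_{n\asymp N} e\!\left(\sum_{j=1}^{m-1}\xi_j n^{j\theta}\right) \Bigr|^{2s}\,\mathrm{d}\boldsymbol{\xi},
\]
taken at the critical exponent $s=s_m$ forced by equating the number of frequency variables with the number of moments. Second, I would invoke $\ell^2$-decoupling for monomial curves, in the spirit of Bourgain--Demeter--Guth, together with Wooley-style efficient congruencing where it applies, to obtain the sharp bound $J_{s_m,m}(N)\ll N^{s_m+\varepsilon}$ — precisely the square-root cancellation input missing from the current proof. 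Third, I would thread this bound back through the stationary-phase estimates, the ``extra frequency'' de-correlation device, and the oscillatory-integral repulsion principle developed in this paper, checking that the admissible exponent this produces is exactly $\theta<1/m$.

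The main obstacle will be step two adapted to the sums that actually occur. Decoupling and efficient congruencing yield sharp bounds in $L^p$ averaged over a continuous frequency box, whereas the sums $S(\vect{k})$ are evaluated on the integer lattice of Poisson frequencies and carry stationary-phase amplitudes supported on non-smooth regions dictated by the combinatorial reduction. One therefore needs a genuinely \emph{discretised} decoupling inequality for the monomial curve, compatible both with the lattice support in $\vect{k}$ and with the shifted phases produced by stationary phase. Constructing such a transference between the continuous inequality and its arithmetic counterpart — or, alternatively, finding an arithmetic input specific to the pure phase $\alpha n^\theta$ that bypasses decoupling altogether — is where I expect the real difficulty to lie. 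Once in place, the remainder should be a substantial but mechanical adaptation of the present framework, delivering the full conjectural range $\theta<1/m$.
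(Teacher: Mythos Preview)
The statement you are addressing is Conjecture~\ref{conj:conj}, which the paper explicitly presents as an open problem; there is no proof in the paper to compare against. What you have written is not a proof but a speculative research programme, and you essentially acknowledge as much in your final paragraph.

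Beyond that framing issue, there is a concrete structural mismatch in your plan. The oscillatory sums that actually arise after Fourier expansion carry phases of the form $\alpha k n^\theta$ for a \emph{single} exponent $\theta$ and varying integer frequencies $k$; after expanding the $m$-th moment one obtains multilinear sums $\sum_{\vect{n}} e\bigl(\alpha \sum_i k_i n_i^\theta\bigr)$ with $\sum_i k_i = 0$. These do not reduce to a Vinogradov-type mean value with phase $\sum_{j=1}^{m-1} \xi_j n^{j\theta}$ as you propose: there is no mechanism in the problem that generates the higher monomials $n^{2\theta}, n^{3\theta}, \ldots$, and Weyl differencing of $n^\theta$ does not produce them either. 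Decoupling and efficient congruencing are sharp for phases built from several monomials of distinct degrees, but here there is only one, so the curve $(t^\theta, t^{2\theta},\ldots)$ you invoke simply does not appear. Within the paper's own approach, the bottleneck after two applications of the $B$-process is the oscillatory integral $\int_0^1 e\bigl(c\sum_\ell \pm r_\ell (h_\ell - s)^{1/\theta}\bigr)\, \mathrm{d}s$ in the auxiliary variable $s$, not an exponential sum over $n$; any improvement toward the conjectured range $\theta<1/m$ along these lines would have to come from a sharper analysis of that integral and the subsequent summation over $\vect{h}$ and $\vect{r}$, not from importing mean-value technology for sums over $n$.
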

   
    Again, we emphasize that the discrepancy between Conjecture \ref{conj:conj} and Theorem \ref{thm:main} is technical in nature and derives from suboptimal exponential sum bounds. 
The only real obstruction for the $m$-point correlation is the sequence 
$( n^{1/m} \mmod 1)_{n>0}$ where the $m^{th}$-powers accumulate at $0$ 
and thus prevent Poissonian correlations. However it should be noted 
that El-Baz, Marklof and Vinogradov have shown that $(\sqrt{n} \mmod 1)_{n>0}$ 
does have Poissonian pair correlation, if one removes all those $n$ which are squares.

While $x_n := log(n) \Mod{1} $ does not have Poissonian gap distribution, our main theorem motivates the idea that a sequence growing faster than $\log(n)$ and slower than any power of $n$ appears to have Poissonian local statistics. We plan to address this question in a forthcoming paper.
\vspace{2mm}

\textbf{Combinatorial Argument:} One of the key steps in our proof is to complete the sums defining the $m$-point correlation, that is to consider
  \begin{equation}\label{def: m-point correlation function}
    \frac{1}{N} 
    \sum_{\vect{n} \in [1,N]^m}
    f(N\|x(n_1)-x(n_2)\|,N \|x(n_2)-x(n_3)\|, \dots, N\|x(n_{m-1})-x(n_{m})\|).
  \end{equation}
  Then, in Section \ref{s:Completion}, using a combinatorial argument, we are able to show that if this sum converges to a specified target, then the $m$-point correlation is indeed Poissonian. Something similar was previously done in \cite{RudnickSarnak1996} for a different distribution. We emphasize that Section \ref{s:Completion} is of independent interest for more general sequences. However the statement relies on a complex combinatorial argument, therefore we do not summarize the results here.

\subsection{History}\label{sec: Short history}

In 1998 Rudnick and Sarnak \cite{RudnickSarnak1998}, showed that the $2$-point correlation (or pair correlation) of \eqref{def: monomial} is Poissonian for any integer $\theta\geq 2$, and (Lebesgue) almost every $\alpha>0$. Two decades later \cite{AistleitnerEl-BazMunsch2021} and \cite{RudnickTechnau2021} proved the same statement for all non-integer $\theta>1$, and  $0<\theta <1$ respectively. However, excluding these metric results, very little is known about sequences on the unit interval growing with polynomial rate.

Proving deterministic results can often be facilitated by arithmetic structure. For example, the renormalized spacings of quadratic residues modulo $q$ have been investigated by Kurlberg and Rudnick \cite{RudnickKurlberg1999} who showed that the appropriate $m$-point correlation functions in this setting are all Poissonian as the number of prime factors of $q$ tends to infinity. We refer to Boca and Zaharescu \cite{BocaZaharescu2000} for a theory of the pair correlation function of quadratic polynomials in finite fields. Moreover there has been some recent work by Kurlberg and Lester on the spacing statistics of lattice points on circles, where again, the arithmetic structure plays an important role \cite{KurlbergLester2021}.

When working on the unit interval, for sequences of the form $x_n = \alpha n^\theta \Mod{1}$, the only explicit result concerning correlations is due to El-Baz, Marklof, and Vinogradov \cite{El-BazMarklofVinogradov2015a} who used the dynamics of theta-sums to show that 
\begin{equation}\label{def: root n sequence}
(\sqrt{n} \mmod 1)_{n\geq 1,\,\mathrm{not\,a\,square}}
\end{equation}
has Poissonian $2$-point correlation. This is somewhat surprising since Elkies and McMullen \cite{ElkiesMcMullen2004} had established, via quantitative non-divergence in the space of lattices, that the gap distribution of \eqref{def: root n sequence} is {\em not} Poissonian.

For $m \ge 3$ there are hardly any results on the probabilistic theory for $m$-point correlation functions and even fewer deterministic results. An exception is the work of Yesha and the second named author \cite{TechnauYesha2020}, who showed that $(n^\alpha \mod 1)_n$ has Poissonian $m$-point correlation,  for almost all $\alpha> 4m^2 -4m -1$. Moreover, for lacunary sequences we refer to Rudnick and Zaharescu \cite{RudnickZaharescu1999,RudnickZaharescu2002}, for dilations of lacunary integer sequences; and Chaubey and Yesha \cite{ChaubeyYesha2021} where this is extended to dilations of real-valued sequences.

Similarly, Rudnick, Sarnak, and Zaharescu \cite{RudnickSarnakZaharescu2001}, and Fassina, Kim, and Zaharescu \cite{FassinaKimZaharescu2021} also studied the the $m$-point correlation functions along lacunary sub-sequences of $N$.

   \subsection{Plan of Paper}
   The proof of Theorem \ref{thm:main} is roughly the same for all values of $m$. First, this will be an inductive argument: assume the sequence has $k$-point correlations for all $k< m$ (note that the range of $\theta$ decreases as $m$ increases). Then we argue in roughly three steps.\\
\\
\underline{Step 1:}
First we relate the problem to the $m^{th}$-moment of a random variable. This will effectively decorrelate the sequence elements, at the cost of introducing a new frequency variable. Then, following the example of \cite{RudnickSarnak1996} we complete the sums to aid the analysis. As a result, we need to do some combinatorial book-keeping of adding and subtracting terms to isolate a 'target' main term. This combinatorial argument, which allows us to complete the sum, is of interest for any sequence. As such Section \ref{s:Completion} is written for a general sequence.
  \\
\\
\underline{Step 2:}
Using various smooth partitions of unity
and approximations to indicator functions, we Fourier expand the counting problem.
This reduces the problem to an asymptotic evaluation of the $L^{m}([0,1])$ -norm of a two dimensional exponential sum. 
We use a variant of van der Corput's $B$-process 
(Poisson summation plus a stationary phase expansion) 
to shorten the ranges of the exponential sums in the $m^{th}$-power. Then we apply the $B$-process a second time in a different variable to maximize the saving. When running the $B$-process some care is needed since we need rather good error terms -- somewhat better than one finds in the classical literature. This forces us to do the $B$-process by hand, and to use a second order, rather than a first order, expansion of the arising oscillatory integrals. If we stop here, then our bound on the error term is of size $O(N^{m\theta})$.
\\ 
\\
\underline{Step 3:}
Next we expand the $L^m([0,1])$-norm,
and estimate oscillatory integrals of the shape
$$
\int_0^1 e(c\sum_{l\leq L} \pm r_l(h_l-s)^{1/\theta}) \mathrm{d}s,
$$ 
where $c\in \C$ is a constant only depending on $\alpha$, and $\theta$,
and $r_l,h_l\approx N^\theta$ for $l\leq L \leq m$. 
The arising main term comes from 
the regime where the phase function $$ \sum_{l\leq L} \pm r_l(h_l-s)^{1/\theta}$$
vanishes identically. 
The remaining terms, due to the polynomial nature of the phase function, 
admit a non-trivial bound. We  show that such a phase function has the property that,
 at any given point $s\in [0,1]$, at least one of the first $m$-derivatives is large and thus we conclude by applying a localised version of van der Corput's lemma, which allows us to bound the error term by $o(1)$ provided $\theta$ is in the given range.

    \vspace{2mm}

\textbf{Notation:}
Throughout, we use the usual Bachmann--Landau notation: for functions $f,g:X \rightarrow \mathbb{R}$, defined on some set $X$, we write $f \ll g$ (or $f=O(g)$) to denote that there exists a constant $C>0$ such that $\vert f(x)\vert \leq C \vert g(x) \vert$ for all $x\in X$. Moreover let $f\asymp g$ denote $f \ll g$ and $g \ll f$, and let $f = o(g)$ denote that $\frac{f(x)}{g(x)} \to 0$.

Given a Schwartz function $f: \R^m \to \R$ let $\wh{f}$ denote the $m$-dimensional Fourier transform:
\begin{align*}
  \wh{f}(\vect{k}) : = \int_{\R^m} f(\vect{x}) e(-\vect{x}\cdot \vect{k}) \mathrm{d}\vect{x}.
\end{align*}
Here, and throughout we let $e(x):= e^{2\pi i x}$.

All of the sums which appear range over integers, in the indicated interval. We will frequently be taking sums over multiple variables, thus if $\vect{u}$ is an $m$-dimensional vector, for brevity, we write
\begin{align*}
  \sum_{\vect{k} \in [f(\vect{u}),g(\vect{u}))}F(\vect{k}) = \sum_{k_1 \in [f(u_1),g(u_1))} \dots \sum_{k_m \in [f(u_m),g(u_m))}F(\vect{k}).
\end{align*}
Moreover, all $L^p$ norms are taken on $[0,1]$ with respect to Lebesgue measure. Let
\begin{align*}
  \Z^\ast:= \Z \setminus \{0\}. 
\end{align*}
      
  As $\alpha, \theta, $ and $f$ are considered fixed, we suppress any dependence in the implied constants. Moreover, for ease of notation, $\varepsilon>0$ may vary from line to line by a bounded constant. Further, we will frequently encounter the exponent 
\begin{align*}
  \Theta := \frac{1}{1-\theta}.
\end{align*}

\section{Preliminaries}
\label{ss:The A and B Processes}

The following stationary phase principle is derived from the work 
of Blomer, Khan and Young \cite[Proposition 8.2]{BlomerKhanYoung2013},
is a key technical device for us.

\begin{lemma}[Stationary Phase Lemma] \label{lem: stationary phase}
  Let $\Phi$ and $\Psi$ be smooth, real valued functions defined on a compact interval $[a, b]$. Let $\Psi(a) = \Psi(b) = 0$. Suppose there exists constants $\Lambda_\Phi,\Omega_\Psi,\Omega_\Phi \geq 3$ so that
  \begin{align} \label{eq: growth conditions}
    \Phi^{(j)}(x) \ll \frac{\Lambda_\Phi}{ \Omega_\Phi^j}, \ \ \Psi^{(j)} (x)
    \ll \frac{1}{\Omega_\Psi^j}\, \ \ \ 
    \text{and} \ \ \ \Phi^{(2)} (x)\gg \frac{\Lambda_\Phi}{ \Omega_\Phi^2}
  \end{align} 
  for all $j=0, \ldots, 4$ and all $x\in [a,b]$. If $\Phi^\prime(x_0)=0$ for a unique $x_0\in[a,b]$,  and if $\Phi^{(2)}(x)>0$, then
  \begin{equation*}
    \begin{split}
      \int_{a}^{b} e(\Phi(x)) \Psi(x) \rd x = &\frac{e(\Phi(x_0) + 1/8)}{\sqrt{\abs{\Phi''(x_0)}}}
      \Psi(x_0)
      + O\left( \frac{\Omega_\Phi} {\Lambda_{\Phi}^{3/2+O(\varepsilon)}}
      \right),
    \end{split}
  \end{equation*}
  provided $\Omega_\Phi /\Omega_\Psi \ll \log \Omega_{\Phi}$. If instead $\Phi^{(2)}(x)<0$ on $[a,b]$ then the same equation holds with $e(1/8)$ replaced by $e(-1/8)$.
\end{lemma}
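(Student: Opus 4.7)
The plan is to follow the classical stationary phase strategy, splitting the integral $I := \int_a^b e(\Phi(x))\Psi(x)\,\rd x$ into an inner and outer region around the unique stationary point $x_0$. Set the scale $\delta := \Omega_\Phi \Lambda_\Phi^{-1/2+\varepsilon}$, define $I_{\mathrm{in}}$ and $I_{\mathrm{out}}$ by restricting the integral to $[x_0-\delta, x_0+\delta]\cap [a,b]$ and to its complement in $[a,b]$ respectively (after inserting a smooth partition of unity adapted to this scale, whose derivatives are controlled by $\delta^{-j}$). The hypothesis $\Omega_\Phi/\Omega_\Psi \ll \log \Omega_\Phi$ guarantees that $\Psi$ is essentially constant on the relevant scale $\delta$, which is what makes the partition innocuous.

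For $I_{\mathrm{out}}$, the lower bound $\Phi^{(2)} \gg \Lambda_\Phi/\Omega_\Phi^2$ combined with $\Phi'(x_0) = 0$ forces $|\Phi'(x)| \gg \Lambda_\Phi |x-x_0|/\Omega_\Phi^2 \gg \Lambda_\Phi^{1/2+\varepsilon}/\Omega_\Phi$ on the outer region. Repeated integration by parts, writing $e(\Phi) = (2\pi i \Phi')^{-1} \frac{d}{dx} e(\Phi)$, gains a factor of $\Lambda_\Phi^{-1/2+O(\varepsilon)}$ per iteration once one bounds the derivatives of $1/\Phi'$ via Fa\`a di Bruno using $|\Phi^{(j)}| \ll \Lambda_\Phi/\Omega_\Phi^j$, and the derivatives of $\Psi$ via $|\Psi^{(j)}|\ll \Omega_\Psi^{-j}$. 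Iterating a bounded number of times absorbs $I_{\mathrm{out}}$ into the error $\Omega_\Phi/\Lambda_\Phi^{3/2+O(\varepsilon)}$.

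For $I_{\mathrm{in}}$, Taylor expand
$$\Phi(x) = \Phi(x_0) + \tfrac{1}{2}\Phi''(x_0)(x-x_0)^2 + R(x), \qquad |R^{(j)}(x)| \ll |x-x_0|^{3-j}\Lambda_\Phi/\Omega_\Phi^3,$$
and write $\Psi(x) = \Psi(x_0) + (\Psi(x) - \Psi(x_0))$. Substituting $u = (x-x_0)\sqrt{|\Phi''(x_0)|}$, the leading contribution is
$$\frac{e(\Phi(x_0)) \Psi(x_0)}{\sqrt{|\Phi''(x_0)|}} \int_{-T}^{T} e(\pm u^2/2) \,\rd u, \qquad T := \delta\sqrt{|\Phi''(x_0)|} \gg \Lambda_\Phi^\varepsilon.$$
Since $T \to \infty$, I extend the range to $\R$, producing the Fresnel integral $\int_\R e(\pm u^2/2)\,\rd u = e(\pm 1/8)$; the tails of the Fresnel integral contribute $O(1/T)$, which translates to an error within budget. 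The remainder $R(x)$ is handled by $e(R) = 1 + 2\pi i R + O(R^2)$: the linear correction produces an oscillatory integral $\int R(x) e(\tfrac{1}{2}\Phi''(x_0)(x-x_0)^2)\,\rd x$, and the quadratic part is bounded trivially; similarly the difference $\Psi(x) - \Psi(x_0)$ is handled by Taylor expansion using $|\Psi'|\ll 1/\Omega_\Psi$, where the log-comparability hypothesis makes the cost affordable.

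The main obstacle is the book-keeping: the cubic Taylor remainder, the variation of $\Psi$ on scale $\delta$, the Fresnel tails, and the iterated integration by parts must all simultaneously fit into $O(\Omega_\Phi / \Lambda_\Phi^{3/2+O(\varepsilon)})$, which is sharper by a factor $\Lambda_\Phi^{-1/2}$ than the standard first-order stationary phase error. Achieving this requires that one retains (and cancels) the contribution of the linear-in-$R$ correction to second order, rather than bounding it trivially; this is precisely the refinement that distinguishes Proposition~8.2 of \cite{BlomerKhanYoung2013} from a textbook stationary phase, and my plan is to invoke that proposition after matching the normalisation of $\Phi, \Psi, \Lambda_\Phi, \Omega_\Phi, \Omega_\Psi$ to their setting.
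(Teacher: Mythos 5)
The paper offers no independent proof of this lemma: it is simply quoted as being derived from Proposition 8.2 of Blomer--Khan--Young, with the hypothesis $\Omega_\Phi/\Omega_\Psi \ll \log \Omega_\Phi$ serving to absorb the second and higher terms of their asymptotic expansion into the stated error $O(\Omega_\Phi/\Lambda_\Phi^{3/2+O(\varepsilon)})$. Your closing step --- matching $(\Phi,\Psi,\Lambda_\Phi,\Omega_\Phi,\Omega_\Psi)$ to their normalisation and invoking that proposition --- is therefore exactly the paper's route, and as a derivation of the lemma it is fine, modulo checking that their hypotheses (derivative bounds of every order needed for the multi-term expansion) are genuinely met in the situations where the lemma is applied, since the statement here only records bounds for $j \le 4$.

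However, the self-contained sketch preceding that citation does not close as written, and you should not present it as an alternative proof. Two concrete points fail. First, on the outer region the claimed gain of $\Lambda_\Phi^{-1/2+O(\varepsilon)}$ per integration by parts is wrong: with $\delta = \Omega_\Phi\Lambda_\Phi^{-1/2+\varepsilon}$ the differentiated cutoff contributes $\delta^{-1}/|\Phi'| \asymp \Lambda_\Phi^{-2\varepsilon}$ and the term $\Psi\,\Phi''/(\Phi')^2$ is likewise $\asymp \Lambda_\Phi^{-2\varepsilon}$, so each step gains only $\Lambda_\Phi^{-2\varepsilon}$; driving the outer contribution down to $\Omega_\Phi\Lambda_\Phi^{-3/2}$ would need on the order of $1/\varepsilon$ iterations, i.e.\ control of arbitrarily many derivatives of $\Phi$ and $\Psi$, while only $j\le 4$ are assumed. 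Second, the Fresnel tails are not within budget: the tail bound $O(1/T)=O(\Lambda_\Phi^{-\varepsilon})$, multiplied by the prefactor $\Psi(x_0)/\sqrt{|\Phi''(x_0)|} \asymp \Omega_\Phi\Lambda_\Phi^{-1/2}$, gives $\Omega_\Phi\Lambda_\Phi^{-1/2-\varepsilon}$, which misses the target error by essentially a full factor of $\Lambda_\Phi$; recovering it requires exploiting oscillation in the truncated Gaussian tails, which runs into the same shortage of derivatives. Repairing either point forces you into the full multi-term stationary phase expansion with uniform error control --- which is precisely what Blomer--Khan--Young Proposition 8.2 packages, and why citing it (as the paper does) is a necessity here rather than a convenience.
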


\noindent Moreover, we also need the following version of van der Corput's lemma (\cite[Ch. VIII, Prop. 2]{Stein1993}).

\begin{lemma}[van der Corput's lemma] \label{lem: van der Corput's lemma}
    Let $[c,d]$ be a compact interval.
    Let $\Phi,\Psi:[c,d]\rightarrow\mathbb{R}$ be smooth functions. 
    Assume $\Phi''$ does not change sign
    on $[c,d]$ and that for some $i\geq 1$ and $\Lambda>0$ the bound 
    \[
    \vert\Phi^{(i)}(x)\vert\geq\Lambda
    \]
    holds for all $x\in [c,d]$.
    Then
    \[
    \int_{c}^{d}\,e(\Phi(x)) \Psi(x)
    \,\mathrm{d}x \ll \Big(\vert \Psi(d) \vert + \int_{c}^d \vert 
    \Psi'(x)\vert\, \mathrm{d}x \Big) 
    \Lambda^{-1/i}
    \]
    where the implied constant depends only on $i$.
\end{lemma}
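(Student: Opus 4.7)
The plan is to follow the classical inductive approach: establish the base case $i=1$ by integration by parts, and reduce higher $i$ to lower by a threshold-splitting argument.

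For $i=1$, since $|\Phi'(x)|\geq\Lambda$, I would write $e(\Phi(x))=(2\pi i\Phi'(x))^{-1}\tfrac{d}{dx}e(\Phi(x))$ and integrate by parts to obtain
\[
\int_c^d e(\Phi(x))\Psi(x)\,dx = \left.\frac{e(\Phi(x))\Psi(x)}{2\pi i\Phi'(x)}\right|_c^d - \int_c^d e(\Phi(x))\left(\frac{\Psi'(x)}{2\pi i\Phi'(x)}-\frac{\Psi(x)\Phi''(x)}{2\pi i\Phi'(x)^2}\right)dx.
\]
The boundary term is of size $O(\Lambda^{-1}(|\Psi(c)|+|\Psi(d)|))$, and the $\Psi'$-piece is bounded by $\Lambda^{-1}\int_c^d|\Psi'|\,dx$. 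For the last piece, the hypothesis that $\Phi''$ does not change sign forces $\Phi''/(\Phi')^2$ to have constant sign, so $\int_c^d|\Phi''/(\Phi')^2|\,dx=|1/\Phi'(d)-1/\Phi'(c)|\leq 2/\Lambda$; pulling out $\sup|\Psi|$ and using $\sup|\Psi|\leq|\Psi(d)|+\int_c^d|\Psi'|\,dx$ closes the base case.

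For $i\geq 2$, I would fix a parameter $\delta>0$ and set $E_\delta:=\{x\in[c,d]:|\Phi^{(i-1)}(x)|<\delta\}$. Since $|\Phi^{(i)}|\geq\Lambda$, the function $\Phi^{(i-1)}$ is strictly monotonic on $[c,d]$, so $E_\delta$ is an interval of length at most $2\delta/\Lambda$; the contribution from $E_\delta$ is therefore bounded trivially by $(2\delta/\Lambda)\sup|\Psi|$. The complement $[c,d]\setminus E_\delta$ is the union of at most two subintervals on each of which $|\Phi^{(i-1)}|\geq\delta$, and the sign hypothesis on $\Phi''$ is inherited; so applying the inductive hypothesis on these subintervals yields a contribution of order $\delta^{-1/(i-1)}\bigl(|\Psi(d)|+\int_c^d|\Psi'|\,dx\bigr)$. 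Choosing $\delta\asymp\Lambda^{1/i}$ balances the two contributions and produces the desired saving $\Lambda^{-1/i}$.

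The main bookkeeping difficulty will be propagating the weight $\Psi$ correctly through the induction: each splitting introduces fresh endpoints, but the inequality $|\Psi(x_0)|\leq|\Psi(d)|+\int_c^d|\Psi'|\,dx$ allows any such endpoint value to be absorbed into the stated right-hand side, with the implied constant depending only on $i$. A minor side-check is that the assumption ``$\Phi''$ does not change sign'' is trivially inherited by any subinterval, so the recursion really does close at $i=1$ where this hypothesis is actually used.
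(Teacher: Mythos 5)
The paper does not prove this lemma at all: it is quoted from the literature (Stein, \emph{Harmonic Analysis}, Ch.~VIII, Prop.~2), so your task was to reconstruct the classical argument, and your outline is essentially that argument. The structure is sound: the base case $i=1$ by integration by parts, where the hypothesis that $\Phi''$ does not change sign gives $\int_c^d\vert\Phi''/(\Phi')^2\vert\,\mathrm{d}x=\vert 1/\Phi'(d)-1/\Phi'(c)\vert\leq 2/\Lambda$, and the induction step by removing the interval where $\vert\Phi^{(i-1)}\vert<\delta$ (an interval of length $\leq 2\delta/\Lambda$ by monotonicity of $\Phi^{(i-1)}$) and applying the inductive hypothesis on the at most two remaining pieces. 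Your bookkeeping for the weight is also fine, since every fresh endpoint value is controlled by $\vert\Psi(x_0)\vert\leq\vert\Psi(d)\vert+\int_c^d\vert\Psi'\vert\,\mathrm{d}x$. The only structural difference from the standard reference proof is that Stein first proves the amplitude-free estimate $\sup_{x}\vert\int_c^x e(\Phi)\vert\ll\Lambda^{-1/i}$ by this induction and then inserts $\Psi$ at the very end by a single integration by parts, which is slightly cleaner but not essentially different from carrying $\Psi$ through the induction as you do.

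One concrete slip: your choice $\delta\asymp\Lambda^{1/i}$ does not balance the two contributions except when $i=2$. You are comparing $\delta/\Lambda$ (trivial bound on $E_\delta$) against $\delta^{-1/(i-1)}$ (inductive bound on the complement); equating them gives $\delta^{i/(i-1)}\asymp\Lambda$, i.e.\ $\delta\asymp\Lambda^{(i-1)/i}$, and then both terms are $\asymp\Lambda^{-1/i}$ as required. With $\delta\asymp\Lambda^{1/i}$ the inductive term is only $\Lambda^{-1/(i(i-1))}$, which for $i\geq 3$ is weaker than the claimed $\Lambda^{-1/i}$. This is a fixable exponent error rather than a flaw in the method, but as written the induction step does not deliver the stated bound for $i\geq 3$.
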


\section{Combinatorial Completion}
\label{s:Completion}
To begin with, we setup the problem for the triple correlations, as the general setup is rather more complicated. The key insight in both cases is the following: using a well-known trick (see, for example, \cite{Marklof2007} for the pair correlation) one can express the completed $m$-point correlation as the $m^{th}$ moment of a particular random variable. In so doing, we effectively de-correlate the sequence elements, at the cost of introducing a new variable, and the benefit of introducing an oscillatory integral. This de-correlation will prove crucial, as it allows us to apply one-dimensional techniques without accumulating error terms. Since this process has applications to more general sequences, in the current section, let $(y(n))_{n>0}$ be a sequence on $\R_{>0}$ and let $x(n) := y(n) \Mod{1}$. 

Without using this trick, one could hope to apply multi-dimensional stationary phase arguments in the same way. However, the size of the determinant of the Hessian is difficult to understand and one needs to contend with the accumulation of error terms.

\subsection{Setup of the Problem: Triple Correlation}
\label{ss:Setup 3}

Assume the sequence $(x(n))$ has Poissonian pair correlations. To access the triple correlation, it is more convenient to work with the following random variable. Let $f$ be a $C_c^\infty(\R)$ function, and define
\begin{align*}
  S_N(s)=S_N : = \sum_{n \in [N]} \sum_{k \in \Z} f(N(y(n) +k  +s)).
\end{align*}
Note that if $f$ was the indicator function of an interval $I$, then $S_N$ would count the number of points in $(x_n)_{n \le N}$ which land in the shifted interval $I/N+s$. Now consider the third moment of $S_N$. That is (assuming for simplicity $f \ge 0$)
\begin{align}
  \cM^{(3)}(N) &:= \int_0^1 S_N^3(s) \,\mathrm{d}s\notag\\
              &= \int_0^1 \sum_{\vect{n} \in [N]^3}\sum_{\vect{k} \in \Z^3}  f(N(y(n_1) +k_1 + s)) f(N(y(n_2) +k_2 + s)) f(N(y( n_3) +k_3 + s)) \mathrm{d} s.\notag
\end{align} 
Moving the $\vect{n},k_1,k_2$ sum outside of the integral and changing variables $s \mapsto (N^{-1}s - y(n_3))$ yields (for $N$ large enough)
\begin{align}
  \cM^{(3)}(N)
  &= \frac{1}{N} \int_\R \sum_{\vect{n} \in [N]^3}\sum_{\vect{k} \in \Z^3}  f(N(y(n_1)-y(n_3) +k_1 + s)) f(N(y(n_2)-y(n_3) +k_2 + s)) f(Nk_3 + s) \mathrm{d} s \notag\\
&=  \frac{1}{N}\sum_{\vect{n} \in  [N]^3} \sum_{\vect{k}\in \Z^2}\int_\R f(N (y(n_1)-y(n_3) +k_1) + s) f(N( y(n_2)-y(n_3) +k_2) + s) f(s)\mathrm{d} s  \notag\\
              &=  \frac{1}{N}\sum_{\vect{n} \in  [N]^3} \sum_{\vect{k}\in \Z^2}F\left(N(  y(n_1)-y(n_3) +k_1), N( y(n_2)-y(n_3) +k_2)\right) \label{M = trip},
\end{align} 
where $F(x,y) : = \int_\R f(x+s)f(y+s)f(s) \,\mathrm{d} s$.
 That is, by considering the third moment of $S_N$, 
we recover the (completed) triple correlation of $F$.

\emph{If the sequence $x(n)$ had Poissonian triple correlations} then:
\begin{align*}
  \frac{1}{N}\sum^\ast_{\vect{n} \in  [N]^3} \sum_{\vect{k}\in \Z^2}
F(N(y(n_1) -y(n_3) +k_1), N( y(n_2) - y(n_3) +k_2)) 
\rightarrow \int_{\R^2} F(x,y) \, \mathrm{d}x \mathrm{d}y = \expect{f}^3.
\end{align*}
Now, if $ n_1= n_3\neq n_2$, then by inspection of \eqref{M = trip}, 
and the compactness of $f$, we recover the pair correlation of $F(0,x)$, 
which, by the assumption that $(x(n))$ has Poissonian pair correlations, 
converges to $\expect{F(0,x)} = \expect{f}\mathbf{E}(f^2)$. 
Moreover if $n_1=n_2=n_3$, we have the trivial sum $F(0,0) = \mathbf{E}(f^3)$. 
From here, we conclude that, $(x(n))$ has Poissonian triple correlations if and only if
\begin{align}\label{target triple}
  \cM^{(3)}(N) \to \expect{f}^3 + 3 \expect{f} \mathbf{E}(f^2) + \mathbf{E}(f^3),
\end{align}
as $N \to \infty$.

With that target in mind, first we apply Poisson summation to the sums over $n_i$, to see that
\begin{align*}
  \cM^{(3)}(N)  &= \frac{1}{N^3}\int_0^1 \sum_{\vect{n} \in [N]^3}
  \sum_{\vect{k} \in \Z^3} \wh{f}\Big(\frac{k_1}{N}\Big)\wh{f}\Big(\frac{k_2}{N}\Big)\wh{f}\Big(\frac{k_3}{N}\Big)    
  e( k_1y(n_1)+ k_2 y(n_2) + k_3y(n_3)+ (k_1+k_2+k_3)s) 
  \,\mathrm{d} s.
\end{align*} 
Now suppose $k_3 = 0$, then we obtain
\begin{align*}
  \expect{f}\frac{1}{N^2}\int_0^1 \sum_{\vect{n} \in [N]^2}
  \sum_{\vect{k} \in \Z^2} 
  \wh{f}\Big(\frac{k_1}{N}\Big)\wh{f}\Big(\frac{k_2}{N}\Big)    
  e( k_1y(n_1)  + k_2y(n_2) + (k_1+k_2)s) 
  \,\mathrm{d} s,
\end{align*} 
which is exactly $\expect{f}$ times the second moment of $S_N$. Therefore, this converges to 
$\expect{f}\mathbf{E}(f^2) + \expect{f}^3$. Thus, by symmetry
\begin{align*}
  \cM^{(3)}(N)  &= \cE(N) + \cP(N) + o(1)
\end{align*} 
where $\cP(N) \to 3\expect{f}\mathbf{E}(f^2)+ \expect{f}^3$ as $N \to \infty$ (the term $\expect{f}^3$ comes from $k_1=k_2=k_3=0$, and is thus only counted once), and where
\begin{align*}
  \cE(N) : = \frac{1}{N^3}\int_0^1 \sum_{\vect{n} \in [N]^3}
\sum_{\vect{k}\in (\Z^\ast)^3 } 
\wh{f}\Big(\frac{\vect{k}}{N}\Big)   
e(\vect{k}\cdot y(\vect{n})  +\vect{k} \cdot \vect{1}s) 
\,\mathrm{d} s,
\end{align*}
here  for the sake of notation, we write $\wh{f}(\vect{k}) : = \wh{f}(k_1)\wh{f}(k_2)\wh{f}(k_3)$ and let $y(\vect{n}) = (y(n_1),y(n_2),y(n_3))$. The remaining goal (for the triple correlation) is to show $\cE(N)$ converges to $\vect{E}\:(f^3)$ as $N \to \infty$.

\subsection{Combinatorial Preparations}\label{sec: combinatorial prep}

This process of completing the $m$-point correlation and then extracting terms to isolate a target is more complicated when $m > 3$, and involves a complicated combinatorial argument. To ease the argument we first fix some notation. 

Given the set $[m]$, let $\cP$ be a partition of $\{1, \dots, m\}$. Let $\vect{n} \in \Z^m$, then we say $\vect{n}$ is $\cP$\emph{-distinct}, if $n_i =n_j$ whenever $i$ and $j$ belong to the same partition element, and otherwise, $n_i \neq n_j$. For example if $m=6$ and $\cP = \{\{1,3\}, \{4\}, \{2,5,6\}\}$, then $\vect{n}$ is $\cP$-distinct if and only if it is of the form $\vect{n} = (a,b,a,c,b,b)$ for some distinct integers $a\neq b \neq c$. Given a partition $\cP$ of $\{1, \dots, m\}$, and a vector $\vect{n} \in \Z^m$, let
\begin{align}\label{chiP def}
  \chi_{\cP}(\vect{n}) := \begin{cases}
    1 & \mbox{ if } \vect{n}\mbox{ is $\cP$-distinct}\\
    0 & \mbox{ otherwise}.
  \end{cases}
\end{align}
Moreover, given a partition $\cP$ of $[ m]$, we say that $j\in [m]$ is \emph{isolated} if $j$ belongs to a partition element of size $1$. A partition is called \emph{non-isolating} if no element is isolated (and otherwise we say it is \emph{isolating}). For our example $\cP = \{\{1,3\}, \{4\}, \{2,5,6\}\}$ we have that $4$ is isolated, and thus $\cP$ is isolating.

\subsection{Setup of the Problem: $m$-point Correlation}

For the $m$-point correlation, we proceed in the same way as we did for the triple correlation. First, assume that for $k \le m-1$ the $k$-point correlation is Poissonian. Now consider the $m^{th}$ moment of $S_N$
\begin{align}
  \cM^{(m)}(N) &:= \int_0^1 S_N(s)^m \mathrm{d}s\notag\\
              &= \int_0^1 \sum_{\vect{n} \in [N]^m}
\sum_{\vect{k} \in \Z^m} 
\left( f(N(y(n_1)+k_1 + s)) \cdots f(N(y(n_m) +k_m + s))\right) 
\mathrm{d} s\notag\\
              &= \int_{\R} \left(\sum_{\vect{n} \in  [N]^m} 
\sum_{\vect{k}\in \Z^{m-1}} 
\left(f(N(y(n_1) +k_1 + s))\cdots f(N(y(n_{m-1}) +k_{m-1} + s)) 
f(N(y(n_m) + s))\right) \right) \mathrm{d} s\notag.
\end{align}
Next, we move the $\vect{n},k_1,\ldots,k_{m-1}$ summation outside of the integral and
thereafter change variables via
$s \mapsto  N^{-1}(s - (k_m + y(n_m)))$. As a result, we see that $\cM^{(m)}(N)$ equals
\begin{align}
              & \frac{1}{N}\sum_{\vect{n} \in  [N]^m} 
\sum_{\vect{k}\in \Z^{m-1}}
\left(\int_\R \left(f(N( y(n_1)- y(n_m)+ k_1) + s) 
\cdots f(N(y(n_{m-1})-y(n_{m}) +k_{m-1}) + s) 
f(s)\right)\mathrm{d} s \right) \notag\\
              &=  \frac{1}{N}\sum_{\vect{n} \in  [N]^m} 
\sum_{\vect{k}\in \Z^{m-1}}
F\left(N(y(n_1)-y(n_2) +k_1),
N( y(n_2)-y(n_3) +k_2), \dots 
N(y(n_{m-1})-y(n_m) +k_{m-1})\right) \notag,
\end{align} 
where 
\begin{align*}
F(z_1,z_2, \dots , z_{m-1}) := \int_{\R} f(s)f(z_1+z_2+\dots + z_{m-1}+s)f(z_2+\dots + z_{m-1}+s)\dots f(z_{m-1}+s)\,\rd s. 
\end{align*}
Note that if $f\in C_c^\infty(\R^m)$ then $F\in C_c^\infty(\R^{m-1})$. The last line is simply the \emph{completed} $m$-point correlation of $F$. Hence our goal is to show that, if we replace the sum over $\vect{n}$ by the sum over $\vect{n}$ with distinct entries, then this converges to $\expect{F} = \expect{f}^m$.

First, let us understand what we have added back in by completing the sum over $\vect{n}$, this will then allow us to write down a 'target' which will provide the desired convergence (for the triple correlation this target was $\mathbf{E}(f^3)$). Consider 
\begin{align}\label{M n def}
  \cM^{(m)}(N) = \int_0^1 \sum_{\vect{n} \in [N]^m}\sum_{\vect{k} \in \Z^m} 
\left( f(N (y(n_1) +k_1 + s)) 
\cdots f(N(y(n_m) +k_m + s))\right) \mathrm{d} s
\end{align}
and apply Poisson summation to each of the sums in $k_i$, giving
\begin{align}\label{M k def}
  \cM^{(m)}(N) = \frac{1}{N^m} \int_0^1 \sum_{\vect{n} \in [N]^m}
\sum_{\vect{k} \in \Z^m}
 \wh{f}\Big(\frac{\vect{k}}{N}\Big)
e( \vect{k}\cdot y(\vect{n})     + \vect{k}\cdot \vect{1}s) \mathrm{d} s,
\end{align}
where $y(\vect{n}) :=(y(n_1), \dots, y(n_m)$. The key insight which motivates the proceeding argument is that if in \eqref{M n def} we have that $n_i$ is distinct from all other $n_j$, then the term corresponding to this case in \eqref{M k def} will come from $k_i =0$.

To access this correspondence, in \eqref{M n def}, let us further decompose the sum over $\vect{n}$  (recall the definition of $\chi_{\cP}$ \eqref{chiP def})
\begin{align*}
  \cM^{(m)}(N) = \sum_{\cP}\int_0^1 
\sum_{\vect{n} \in [N]^m}  \chi_{\cP}(\vect{n}) 
\sum_{\vect{k} \in \Z^m} \left( f(N(y(n_1) +k_1 + s)) 
\cdots f(N(y(n_m) +k_m + s))\right) \mathrm{d} s
\end{align*}
where, the sum over $\cP$ is over distinct partitions of $\{1, \dots , m\}$. Clearly, the $m$-point correlation corresponds to the trivial partition $\cP_0:=\{\{1\}, \{2\}, \dots, \{m\}\}$. All of the other terms come from completing the sum. Given a partition $\cP$, let
\begin{align*}
  \cM_{\cP}(N) := \int_0^1 \sum_{\vect{n} \in [N]^m}  
\chi_{\cP}(\vect{n}) \sum_{\vect{k} \in \Z^m} 
\left( f(N(y(n_1) +k_1 + s)) \cdots 
f(N(y(n_m) +k_m + s))\right) \mathrm{d} s.
\end{align*}

Now consider the sum \eqref{M k def}, and perform a decomposition on the $\vect{k}$ variable:
\begin{align*}
  \cM^{(m)}(N) =  \expect{f}^m + \sum_{j=2}^{m} {m \choose m-j}
\wh{f}(0)^{m-j}\frac{1}{N^j} \int_0^1 \sum_{\vect{n} \in [N]^j}
\sum_{\substack{\vect{k}\in (\Z^{\ast})^j}} 
\wh{f}\Big(\frac{\vect{k}}{N}\Big)
e( y(\vect{n})\cdot \vect{k}    + \vect{k}\cdot \vect{1}s) \mathrm{d} s,
\end{align*}
that is, we fix a $j$ and choose $m-j$ of the $k_i$ components to be equal to $0$. Note that $j$ cannot be equal to $1$ since the integral in $s$ forces $k_1 + \dots + k_m = 0$, therefore we cannot have only one $k_i \neq 0$. Let
\begin{align*}
  \cK_{j}(N):={m \choose m-j}\wh{f}(0)^{m-j}\frac{1}{N^j} 
\int_0^1 \sum_{\vect{n} \in [N]^j}
\sum_{\substack{\vect{k} \in (\Z^{\ast})^j}}
 \wh{f}\Big(\frac{\vect{k}}{N}\Big)
e(\vect{k}\cdot y(\vect{n})   + 
\vect{k}\cdot \vect{1}s) \mathrm{d} s
\end{align*}
Note that  $\cK_{0}(N):= \wh{f}\left(0\right)^m = \expect{f}^m$.

The following proposition is enough to prove Theorem \ref{thm:main}
\begin{proposition}\label{lem:MP = KP}
  Fix $j \in \{0,2,3, \dots, m\}$ we have that
  \begin{align}\label{MP = KP}
    \lim_{N \to \infty} \cK_{j}(N) = \lim_{N \to \infty} \sum_{\substack{\cP\\m-j \mbox{ iso.}}} \cM_{\cP}(N),
  \end{align}
  where the sum ranges over the partitions $\cP$ with $m-j$ many isolated points. 
\end{proposition}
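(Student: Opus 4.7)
The plan is to recast $\cK_j$ as a central moment, expand both sides as partition sums of ``$\tilde T$-integrals'', and reduce the identity to the vanishing of certain iso-type quantities controlled by Poissonian lower-order correlations.

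Set $T(s,a) := \sum_{k \in \Z} f(N(y(a) + k + s))$ and $\tilde T(s,a) := T(s,a) - \wh f(0)/N$, so that $S_N(s) - \wh f(0) = \sum_a \tilde T(s,a)$ and $\int_0^1 \tilde T(s,a)\,\mathrm{d}s = 0$ for every $a$. For a partition $\cQ$ of a finite set, put
\[
\tilde \cM_\cQ(N) := \int_0^1 \sum_{\vect{a} \in [N]^{|\cQ|},\ \text{dist}} \prod_{B \in \cQ} \tilde T(s, a_B)^{|B|}\,\mathrm{d}s.
\]
Un-doing the Poisson summation in $\cK_j$ and using $\int_0^1 \tilde T\,\mathrm{d}s = 0$ gives $\cK_j(N) = \binom{m}{j}\wh f(0)^{m-j}\int_0^1 (S_N - \wh f(0))^j\,\mathrm{d}s$, so that expanding the $j$-th power over partitions of $[j]$ yields
\[
\cK_j(N) = \binom{m}{j}\wh f(0)^{m-j}\sum_{\cP'' \vdash [j]}\tilde \cM_{\cP''}(N).
\]

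Next, for each $\cP \vdash [m]$ with $m - j$ isolated indices $I$ and non-isolating part $\cP'$ on the remaining $j$ indices, I would insert $T = \tilde T + \wh f(0)/N$ into $\cM_\cP(N) = \int_0^1 \sum_{\vect{a},\ \text{dist}} \prod_{i \in I} T(s, a_i) \prod_l T(s, a_l)^{|B_l|}\,\mathrm{d}s$ and analyse the resulting expansion. A direct $N$-power count shows that $O(1)$-contributions arise only when each non-isolated block appears in pure $\tilde T^{|B_l|}$ form---any $\wh f(0)/N$-factor inside a non-isolated block of size $\geq 2$ costs an unrecovered $N^{-1}$---while each isolated position is free to be kept as $\tilde T$ or absorbed as $\wh f(0)/N$ (the latter converting the sum over its $a$-variable into a factor $\wh f(0)$). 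Organising by $t = |J|$, where $J \subseteq I$ is the subset retained as $\tilde T$,
\[
\cM_\cP(N) = \sum_{t=0}^{m-j}\binom{m-j}{t}\wh f(0)^{m-j-t}\,\tilde \cM_{\cP'_t}(N) + o(1),
\]
where $\cP'_t$ is the partition of $[j + t]$ built by adjoining $t$ singletons to $\cP'$.

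Summing over the $\binom{m}{m-j}$ choices of iso-set and all non-isolating $\cP' \vdash [j]$ and comparing with the $\cK_j$-expansion, the proposition reduces---once the $t = 0$ terms of the $\cM_\cP$-expansion are matched with the non-isolating $\cP'' \vdash [j]$ contributions to $\cK_j$---to the following \emph{Key Lemma}: $\tilde \cM_\cQ(N) \to 0$ as $N \to \infty$ for every partition $\cQ$ on $\leq m$ elements that contains an isolated singleton. I would prove the Key Lemma in the accessible range $|\cQ| < m$ by expanding $\tilde T = T - \wh f(0)/N$ inside $\tilde \cM_\cQ$, producing an alternating inclusion--exclusion of $\cM_{\cQ'}$-type expressions; by the inductive hypothesis (Poissonian $k$-point correlations for $k < m$) each of these converges to an explicit product of $\wh f(0)$'s and $\mathbf{E}(f^{\bullet})$'s, and the signed binomial coefficients telescope to $0$ thanks to the $\int \tilde T\,\mathrm{d}s = 0$ contribution forced by each isolated singleton.

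The main obstacle is the Key Lemma in its $|\cQ| = m$ instance, which is logically equivalent to Poissonian $m$-point correlation itself and hence not accessible by induction alone; this case is handled via the Fourier analysis of $\cK_m$ developed in the remaining sections. Consistently, the $j = 0$ case of the proposition coincides with Theorem \ref{thm:main} and, once the $j \geq 2$ cases are established, follows from the tautological identity $\sum_j \cK_j(N) = \cM^{(m)}(N) = \sum_\cP \cM_\cP(N)$.
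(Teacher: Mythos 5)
Your proposal is correct in outline and arrives at the same reduction as the paper, but by different mechanics. The paper's proof is very short: after the observation you also make (un-doing Poisson summation shows $\cK_j(N)=\binom{m}{m-j}\wh{f}(0)^{m-j}$ times the level-$j$ analogue of $\cK_j$, i.e.\ $\binom{m}{j}\wh{f}(0)^{m-j}\int_0^1(S_N-\wh{f}(0))^j\,\mathrm{d}s$), it simply invokes Lemma \ref{lem:MP = KP non-isolating} at correlation level $j$ to obtain \eqref{K to non-isoM}, and identifies this with the right-hand side of \eqref{MP = KP} because a partition of $[m]$ with $m-j$ isolated points is a choice of isolated set together with a non-isolating partition of the remaining $j$ indices, the isolated coordinates decoupling in the limit by the lower-order Poissonian hypothesis. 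Your route -- centering $\tilde T=T-\wh{f}(0)/N$, expanding both $\cK_j$ and $\cM_{\cP}$ into the quantities $\tilde\cM_{\cQ}$, and reducing to a Key Lemma proved by inclusion--exclusion from the inductive hypothesis -- is a more explicit, self-contained version of the same bookkeeping; it makes visible exactly which lower-order correlation inputs are used, at the cost of more combinatorics.

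There is, however, one step that does not go through as written: your accessibility criterion for the Key Lemma. You prove it only for partitions on a ground set of size $<m$ and defer all size-$m$ instances to ``the Fourier analysis of $\cK_m$''. But for every $j\in\{2,\dots,m-1\}$ your expansion of $\cM_{\cP}$ produces the terms $\tilde\cM_{\cP'_t}$ with $j+t=m$, i.e.\ partitions on exactly $m$ elements containing both singletons and at least one block of size $\geq 2$. These lie outside your proved range, and the later sections do not supply them: they establish the aggregate statement $\lim\cK_m=\lim\sum_{\cP\in\mathscr{P}_m}\cM_{\cP}$, not the individual vanishing of such mixed centered quantities, so the $j<m$ cases would rest on something never established (and of essentially the same flavor as what is being proved). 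The repair is immediate and uses only tools you already deploy: the right parameter is not the ground-set size but the number of blocks, i.e.\ the number of distinct sequence variables. Any isolating partition other than the all-singleton one has at most $m-1$ blocks, so the corresponding $\tilde\cM_{\cQ}$ is a correlation quantity of order at most $m-1$ and vanishes by exactly the inclusion--exclusion-plus-induction argument you sketch for the smaller ground sets. The only genuinely hard instance is the all-singleton partition of $[m]$, and it enters only at $j=m$ (equivalently $j=0$), i.e.\ in Lemma \ref{lem:MP = KP non-isolating} itself, which both you and the paper rightly defer to the Fourier-analytic sections.
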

This is enough to prove Theorem \ref{thm:main} since we have that the $m$-point correlation is given by 
\begin{align*}
  \cM_{\cP_0}(N) &= \cM^{(m)}(N) - \sum_{\cP \neq \cP_0} \cM_{\cP}(N)\\
           &= \sum_{j\in \{0, 2, 3, \dots, m\}} \cK_{j}(N) - \sum_{j=2}^{m} \sum_{\substack{\cP\\ m-j \mbox{ iso.}}} \cM_{\cP}(N)\\
  &= \cK_{0}(N) = \expect{f}^{m}
\end{align*}
(note that it is impossible to have all but $1$ coordinate be isolated, since a non-isolated coordinate must be in a partition element with another non-isolated coordinate).

In fact, it is enough to restrict to non-isolating partitions. Let $\mathscr{P}_m$ denote the set of non-isolating partitions of $[m]$.
\begin{lemma}\label{lem:MP = KP non-isolating}
  We have that
  \begin{align}\label{MP = KP non-isolating}
    \lim_{N \to \infty} \cK_{m}(N) = \lim_{N \to \infty} \sum_{\cP\in \mathscr{P}_m}\cM_{\cP}(N).
  \end{align}
\end{lemma}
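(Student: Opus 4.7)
This Lemma is the $j=m$ case of Proposition \ref{lem:MP = KP}, corresponding to partitions with $m-j=0$ isolated indices, i.e.\ the non-isolating ones. My plan is to prove the Lemma directly; the remaining cases of the Proposition then follow by combining it with the inductive hypothesis (that $k$-point correlations are Poissonian for $k<m$) and the two identities $\cM^{(m)}(N) = \sum_{\cP} \cM_\cP(N) = \sum_j \cK_j(N)$ that were used above to derive Theorem \ref{thm:main}.

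The starting point is the reformulation obtained by undoing the Poisson summation in \eqref{M k def}:
\begin{equation*}
\cK_m(N) = \int_0^1 \tilde S_N(s)^m\,\rd s, \qquad \tilde S_N(s) := S_N(s) - \wh f(0),
\end{equation*}
where $\tilde S_N(s) = \sum_{n\in[N]} \tilde T_n(s)$ and $\tilde T_n(s) := \sum_{k\in\Z} f(N(y(n)+k+s)) - \wh f(0)/N$. Expanding the $m$-th power and grouping the $\vect n$-sum by its equality pattern $\cP$ yields
\begin{equation*}
\cK_m(N) = \sum_{\cP} \int_0^1 \sum_{\vect n:\,\cP\text{-dist.}}\prod_{i=1}^m \tilde T_{n_i}(s)\,\rd s.
\end{equation*}

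For $\cP \in \mathscr P_m$ (every block has size $\ge 2$, so $|\cP|\le m/2$), I would replace each $\tilde T_n$ by $T_n$ and show that the resulting error is $o(1)$. The correction terms carry extra factors of $\wh f(0)/N$; because each $T_{n_i}$-factor localizes $n_i$ (via the clustering $\|y(n_{B})-y(n_{B'})\|\ll 1/N$ forced by the supports), a direct counting argument shows that every $\wh f(0)/N$ replacement delivers a net $1/N$ saving over the main term. Hence the $\cP\in\mathscr P_m$ portion of $\cK_m(N)$ equals $\sum_{\cP\in\mathscr P_m}\cM_\cP(N) + o(1)$.

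The main content is then to show that the isolating partitions contribute only $o(1)$ to $\cK_m(N)$. For an isolating $\cP$ with isolated index $\{i\}$, the factor $\tilde T_{n_i}$ has no zero-frequency Fourier mode, so the $n_i$-sum is a genuine oscillatory exponential sum in $y(n_i)=\alpha n_i^\theta$. This is the analytic heart of the paper: one applies the $B$-process (Poisson summation plus the stationary-phase expansion of Lemma \ref{lem: stationary phase}) by hand with second-order error terms, then a second time in a further variable to shorten the resulting integrals, and finally invokes the localized van der Corput estimate of Lemma \ref{lem: van der Corput's lemma} on integrals of the shape $\int_0^1 e(c\sum_{l\le L}\pm r_l(h_l-s)^{1/\theta})\,\rd s$. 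A key combinatorial input is that such a phase function cannot have all of its first $m$ derivatives simultaneously small at any $s\in[0,1]$, so for at least one order the derivative bound is large enough to yield saving via Lemma \ref{lem: van der Corput's lemma}. This last step is the principal obstacle, and its strength is precisely what fixes the admissible range $0<\theta<1/(m^2+m-1)$ in Theorem \ref{thm:main}.
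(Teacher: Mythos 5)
Your reformulation is sound and matches the paper's starting point: indeed $\cK_m(N)=\int_0^1\big(S_N(s)-\wh f(0)\big)^m\,\mathrm{d}s$ is exactly the quantity the paper calls $\cE(N)$, and expanding the $m$-th power over equality patterns of $\vect{n}$ is a legitimate way to organize the identity to be proved. The problem is that both of your key claims are asserted rather than proved, and the central one is simply delegated back to ``the analytic heart of the paper''. First, the replacement step ($\tilde T\mapsto T$ for non-isolating $\cP$): a ``direct counting argument'' based on support localization does not give the claimed $1/N$ saving, because a priori nothing prevents the mixed sums $\int_0^1\sum^{\ast}\prod_j T_{n_j}^{a_j}\,\mathrm{d}s$ from being as large as a power of $N$ for a badly distributed sequence; to make this step work you must invoke the inductive hypothesis that the $d$-point correlations, $d\le m/2<m$, are Poissonian (which is available, but is not what you appeal to). Second, and more seriously, the claim that each isolating partition contributes $o(1)$ to $\cK_m(N)$ is the entire content of the lemma, and you do not prove it: you describe the paper's machinery (two $B$-processes, Lemma \ref{lem:diag}, Proposition \ref{prop:int s}, the Vandermonde/van der Corput repulsion), but that machinery, as set up in Sections \ref{s:Triple Correlation}--7, is applied to the \emph{completed} sum $\cE(N)$ with free $\vect{n}\in[N]^m$ and $\vect{k}\in(\Z^\ast)^m$, and the main term is extracted as the diagonal in the dual variables $(\vect{r},\vect{h})$ after Poisson summation --- not partition by partition in the $\vect{n}$-equality pattern. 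Your isolating-partition terms carry distinctness constraints on the $n_i$, which block a direct application of Poisson summation in each $n_i$; removing them by inclusion--exclusion reintroduces coarser partitions and is precisely the combinatorial bookkeeping this lemma (and the surrounding Section \ref{s:Completion}) is designed to encapsulate. So the deduction ``isolating $\Rightarrow o(1)$ by the paper's oscillatory estimates'' is not immediate and is nowhere carried out in your proposal.

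In short: your decomposition is a plausible alternative organization (centered moments split by equality pattern, versus the paper's route of proving the single asymptotic \eqref{m target} for the completed sum and reading off the non-isolating main terms from the diagonal), but as written it reduces the lemma to two unproved statements, one of which needs the inductive Poissonian hypothesis you did not use there, and the other of which is equivalent in difficulty to the whole of the paper's Sections 4--7 and cannot simply be cited from them in the partition-wise form you need.
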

The proof of Lemma \ref{lem:MP = KP non-isolating} is the content of Section \ref{s:Triple Correlation}. Let us assume it is true for the time being, and show that Proposition \ref{lem:MP = KP} follows.

\vspace{4mm} 
\begin{proof}[Proof that Lemma \ref{lem:MP = KP non-isolating} 
implies Proposition \ref{lem:MP = KP}]
  The proof for $m=3$ is clear from Subsection \ref{ss:Setup 3}. 
Take $m > 3$ and assume Lemma \ref{lem:MP = KP non-isolating} 
holds for all values of the correlation level less than, or equal to $m$. 
Assume $j <m$ and consider
  \begin{align*}
    \cK_j(N) = {m \choose m-j}\wh{f}(0)^{m-j}\frac{1}{N^j}
 \int_0^1 \sum_{\vect{n} \in [N]^j}
\sum_{\substack{\vect{k} \in (\Z^{\ast})^j}} 
\wh{f}\Big(\frac{\vect{k}}{N}\Big)
e( \vect{k}\cdot y(\vect{n})    + 
\vect{k}\cdot \vect{1}s) \mathrm{d} s,
  \end{align*}
  we can use Lemma \ref{lem:MP = KP} for $m =j$ to deduce that
  \begin{align}\label{K to non-isoM}
    \lim_{N \to \infty}\cK_j(N) = {m \choose m-j} \expect{f}^{m-j}\sum_{\cP\in \mathscr{P}_j} \lim_{N \to \infty}\cM_{\cP}(N).
  \end{align}
\end{proof} 
It remains to prove \eqref{MP = KP non-isolating}, or equivalently
\begin{gather}
  \lim_{N \to \infty}\cE(N) =  \sum_{\cP\in \mathscr{P}_m}  \expect{f^{\abs{P_1}}}\cdots \expect{f^{\abs{P_d}}}.\label{m target}
\end{gather}
where we have labeled the partition $\cP = (P_1, P_2, \dots, P_d)$, and $\abs{P_i}$ is the size of $P_i$, and where
\begin{align*}
  \cE(N):= \frac{1}{N^m} \int_0^1 \sum_{\vect{n} \in [N]^m}\sum_{\substack{\vect{k} \in (\Z^{\ast})^m}} \wh{f}\left(\frac{\vect{k}}{N}\right)e(\alpha \vect{k}\cdot \vect{n}^\theta  + \vect{k}\cdot \vect{1}s) \mathrm{d} s.
\end{align*}
The remainder of the paper is devoted to proving \eqref{m target}.

\subsection{Dyadic Decomposition} 
It is convenient to decompose the sums over $n$ and $k$ into dyadic ranges in a smooth manner. Given $N$, we let $Q>1$ be the unique integer with $e^{Q}\leq N < e^{Q+1}$. Now, we describe a smooth partition of unity which approximated the indicator function of $[1,N]$. Strictly speaking, these partitions depend on $Q$, however we suppress it from the notation. Furthermore, since we want asymptotics of $\mathcal{M}^{(m)}(N)$, we need to take a bit of care at the right end point of $[1,N]$, a tighter than dyadic decomposition is needed. Let us make this precise. For $0\le q < Q$ we let $\mathfrak{N}_q$ denote a smooth function for which
\begin{align*}
  \operatorname{supp}(\mathfrak{N}_q) \subset [e^{q}/2, 2 e^q)
\end{align*}
and such that $\mathfrak{N}_{q}(x) + \mathfrak{N}_{q+1}(x) = 1 $ for $x$ between $2e^{q-1}$ and $e^{q+2}/2$. Now for $q \ge Q$ we let $\mathfrak{N}_q$ form a smooth partition of unity for which 
\begin{align*}
  &\sum_{q=0}^{2Q-1} \mathfrak{N}_q (x) =\begin{cases}
  1 & \mbox{if } 1< x < e^{Q}\\
  0 & \mbox{if } x< 1/2 \mbox{ or } x > N + \frac{3N}{\log(N)}
  \end{cases},\,\mathrm{and}\\
 & \operatorname{supp}(\mathfrak{N}_q) \subset \left[\frac{e^{Q}}{2} + (q-Q)\frac{e^{Q}}{2Q}, \frac{e^Q}{Q} + (3+q-Q)\frac{e^{Q}}{2Q}\right).
\end{align*}
Let $\Vert \cdot \Vert_{\infty}$
denote the maximum norm on $\R$.
We impose the following condition on the derivatives:
\begin{align}\label{N deriv}
  \Vert \mathfrak{N}_{q}^{(t)}\Vert_{\infty} \ll \begin{cases}
    e^{-qt} & \mbox{ for } q < Q\\ 
    (e^{Q}/Q)^{-t} & \mbox{ for } Q< q,
  \end{cases}
\end{align}
for $t \le 4$. Thus 
\begin{equation}\label{eq: upper bound moment}
  \cE(N) \le \int_0^1  
  \bigg( \frac{1}{N} 
  \sum_{q=0}^{2Q-1} \sum_{n \in \Z} 
  \mathfrak{N}_{q}(n)   
  \sum_{k\neq 0} 
  \wh{f}\left(\frac{ k}{N}\right)   
  e(   k\alpha n^\theta +  ks)
  \bigg)^m \mathrm{d} s.
\end{equation}
 A similar lower bound can also be achieved by omitting some terms from the partition. 

We similarly decompose the $k$ sums, although thanks to the decay of Fourier transforms, we do not need to worry about the large $k$ values. Let $\mathfrak{K}_u$ be a smooth function such that
\begin{align*}
  \sum_{u=-U}^{U} \mathfrak{K}_u(k) =\begin{cases}
  1 & \mbox{ if } \vert k\vert  \in [1, N^{1+\varepsilon})\\
  0 & \mbox{ if } \vert k\vert < 1/2 \mbox{ or } \vert x\vert > N + 3\frac{N}{\log(N)},
  \end{cases}
\end{align*}
and the symmetry $\mathfrak{K}_{-u}(k) = \mathfrak{K}_{u}(-k)$ holds true for 
all $u,k> 0$. Additionally, we require
\begin{align*}
  &\supp(\mathfrak{K}_u) =[e^u/2, 2e^u)
  \qquad\qquad \mbox{ if } u \ge 0\,\,,\mbox{ and } \\
  &
  \Vert \mathfrak{K}_{u}^{(t)} \Vert_{\infty} \ll e^{-\abs{u}t},\qquad\qquad
  \mbox{for all } 1\le t \le 4.
\end{align*}
Therefore a central role is played by the smoothed exponential sums
\begin{equation}\label{def: E_qu}
  \cE_{q,u}(s):=
  \frac{1}{N}\sum_{k\in \Z} \mathfrak{K}_{u}(k) 
  \wh{f}\Big(\frac{k}{N}\Big)e( ks)
  \sum_{n\in \Z} \mathfrak{N}_{q}(n)e(   k\alpha n^\theta ).
\end{equation}
Notice that \eqref{eq: upper bound moment} and the rapid decay of $\wh{f}$ imply
\begin{align*}
\cE(N) \ll \bigg\Vert \sum_{u=-U}^{U}\sum_{q=0}^{2Q-1} \cE_{q,u} 
\bigg\Vert_{L^m}^m
+ o(1).
\end{align*}
Now write
\begin{align*}
  \cF(N) :=   \frac{1}{N^m} 
  \sum_{\vect{q}=0}^{2Q-1}
  \sum_{\vect{u} = -U}^{U} 
  \sum_{\vect{k},\vect{n} \in \Z^m} 
  \mathfrak{K}_{\vect{u}}(\vect{k})
  \mathfrak{N}_{\vect{q}}(\vect{n})   
  \int_0^1\wh{f}\Big(\frac{\vect{k}}{N}\Big)   
  e( \alpha\vect{k}\cdot \vect{n}^\theta  + \vect{k}\cdot \vect{1} s)\, \mathrm{d} s,
\end{align*}
where $\mathfrak{N}(\vect{n}) := \mathfrak{N}(n_1)\mathfrak{N}(n_2)\cdots\mathfrak{N}(n_m)$. Our goal will be to establish that $\cF(N) = \mathbf{E}(f^m) + o(1)$. 
Then, since we can establish the same asymptotic for the lower bound, 
we may conclude the asymptotic for $\cE(N)$. 
Since the details are identical, we will only focus on $\cF(N)$.

Fixing, $\vect{q}$, and $\vect{u}$, we let 
\begin{align*}
  \cF_{\vect{q},\vect{u}}(N) 
  & =
   \frac{1}{N^m}\int_0^1  
   \sum_{\substack{\vect{n},\vect{k} \in \Z^m}} 
   \mathfrak{N}_{\vect{q}}(\vect{n})
   \mathfrak{K}_{\vect{u}}(\vect{k})   
   \wh{f}\Big(\frac{\vect{k}}{N}\Big)   
   e(\alpha \vect{k}\cdot \vect{n}^{\theta} +  \vect{k}\cdot \vect{1}s) \mathrm{d} s.
\end{align*}

\begin{remark}
  In the proceeding sections, we will fix $\vect{q}$ and $\vect{u}$. 
Because of the way we have defined $\mathfrak{N}_q$, this implies two cases: 
$q<Q$ and $Q<q$. 
The only real difference in these two cases are the bounds in \eqref{N deriv}, 
which differ by a factor of $Q = \log(N)$. To keep the notation simple, 
we will assume we have $q <Q$ and work with the first bound. 
In practice the logarithmic correction does not affect any of the results or proofs
\end{remark}

\section{Applying the $B$-process}
\label{s:Triple Correlation}

\vspace{2mm}

Fix a small $\delta>0$. We say $(u,q)\in [N^{1+\varepsilon}]\times[2 Q]$ is \emph{degenerate} if either one of the following holds
\begin{align*}
  \alpha \theta e^{\abs{u}+(\theta-1)q}<1/10,
  \,\, \mathrm{or}\,\,
  q \leq \delta Q.
\end{align*}
Otherwise $(u,q)$ is called \emph{non-degenerate}. Let $\mathscr{G}(N)$ denote the set of all non-degenerate pairs $(u,q)$. In this section it is enough to suppose that $u>0$ (and therefore $k>0$). Next, we show that degenerate $(u,q)$ are negligible.  If $\alpha \theta e^{u+(\theta-1)q}<1/10$,  then the Kusmin--Landau estimate (see \cite[Corollary 8.11]{IwaniecKowalski2004}) implies
\begin{align*}
  \sum_{n\in \Z} 
  \mathfrak{N}_{q}(n)      
  e( k\alpha n^\theta) \ll \frac{1}{k e^{(\theta-1)q}},
\end{align*}
and hence
\begin{align*}
  \Vert \cE_{q,u} \Vert_\infty 
  \ll \frac{1}{N} 
     \sum_{k\asymp e^{u}}
     \frac{e^{(1-\theta)q}}{k}
  \ll \frac{e^{(1-\theta)q}}{N}u\ll N^{-\theta+\varepsilon}.
\end{align*}
Now suppose $q \leq \varepsilon Q$. Expanding the $m^{th}$-power, evaluating the $s$-integral and trivial estimation yield
\begin{align*}
  \Vert \cE_{q,u} \Vert_{L^m}^m
  \ll \frac{1}{N^m} \#\{k_1,\dots,k_m\asymp e^u: k_1 +\dots +k_m = 0 \}N^{m\delta}
  \ll N^{m\delta-1+\varepsilon}.
\end{align*}
The upshot is that there exists a constant $\rho=\rho(\theta) >0$ so that
\begin{align*}
  \bigg \Vert  \sum_{(u,q)\in [N^{1+\varepsilon}]\times[2Q] \setminus \mathscr{G}(N)}\cE_{q,u} 
  \bigg\Vert_{L^m}^m \ll N^{-\rho},
\end{align*}
and the triangle inequality implies
\begin{equation}\label{eq: cal F reduced}
  \mathcal{F}(N) = 
  \bigg\Vert \sum_{(u,q) \in \mathscr{G}(N)}\cE_{q,u} \bigg\Vert_{L^m}^m + O(N^{-\rho}).
\end{equation}

\subsection{First application of the $B$-Process}
\label{ss:B proc trip}
Now we are ready to apply the $B$-process to shorten the $n$-summation in $\cE_{q,u}(s)$. 
To that end, assume $k>0$ and let
\begin{align*}
  \phi(k,r): = \beta k^{\Theta}r^{1-\Theta},
\end{align*}
where
\begin{align*}
  \beta := \alpha^\Theta(\theta^{\Theta-1}-\theta^{\Theta}),
\end{align*}
note that $\beta<0$, and thus we will flip the sign of the phase function by applying the $B$-process. To simplify the analysis of signs in the two different cases $u>0$ and $u<0$, we make the following observation.
We can suppose that $f$ is an even function;
thus $\widehat{f}$ is even and real-valued.
Hence, 
\begin{equation}\label{eq: conjugation}
  \mathcal{E}_{q,-u}(s) = \overline{\mathcal{E}_{q,u}(s)}
\end{equation} 
holds for all $s\in \R$ which reduces the discussion of the case $u<0$  to the case $u>0$. The next lemma states that $\cE_{q,u}$ is approximated suitably well by 
\begin{align*}
  \cE^{(B)}_{q,u}(s) := 
  \frac{c_1 e(-1/8)}{N}\sum_{k\geq 0} \mathfrak{K}_{ u}(k)\wh{f}\Big(\frac{ k}{N}\Big) e(ks)
  \sum_{r\geq 0} \mathfrak{N}_{q}((\alpha\theta k/r)^{\Theta})\frac{k^{\frac{\Theta}{2}}}{r^{\frac{\Theta+1}{2}}}e(\phi(k,r)),
\end{align*}
where 
\begin{align*}
c_1 :=  \sqrt{\Theta(\alpha\theta)^{\Theta}}.
\end{align*}

\begin{lemma}\label{lem:E B triple}
   If $u>0$, then 
   $   \Vert \cE_{q,u} -  \cE^{(B)}_{q,u}\Vert_{\infty} = O(N^{-\varepsilon})$
   uniformly for all non-degenerate $(u,q)\in \mathscr{G}(N)$.
\end{lemma}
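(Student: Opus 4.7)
The plan is to apply the classical $B$-process: Poisson summation in the variable $n$ followed by Lemma \ref{lem: stationary phase} applied to each resulting oscillatory integral. By the conjugation identity \eqref{eq: conjugation}, it suffices to treat the case $u>0$. Poisson summation rewrites the inner $n$-sum in \eqref{def: E_qu} as $\sum_{r\in\Z}I_{k,r}$ where
\begin{equation*}
I_{k,r}:=\int_\R \mathfrak{N}_q(x)\,e(\Phi_{k,r}(x))\,\mathrm{d}x,\qquad \Phi_{k,r}(x):=k\alpha x^\theta - rx.
\end{equation*}
The derivative $\Phi_{k,r}'(x)=k\alpha\theta x^{\theta-1}-r$ vanishes at $x_0=(\alpha\theta k/r)^\Theta$, which lies in $\supp\mathfrak{N}_q\subset[e^q/2,2e^q)$ only for $r$ in an interval $\mathcal{R}_k$ of length $\asymp e^{u+(\theta-1)q}$ and centre of the same size. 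By the non-degeneracy assumption $\alpha\theta e^{u+(\theta-1)q}>1/10$, this interval captures a non-trivial number of integers.

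For $r\in\mathcal{R}_k$ a direct computation gives $\Phi_{k,r}(x_0)=k\alpha(1-\theta)x_0^\theta=\phi(k,r)$ (using $\alpha^\Theta\theta^{\Theta-1}(1-\theta)=\beta$), and $\Phi_{k,r}''(x_0)=-k\alpha\theta(1-\theta)x_0^{\theta-2}<0$ with $|\Phi_{k,r}''(x_0)|^{-1/2}=c_1 k^{\Theta/2}/r^{(\Theta+1)/2}$. Applying Lemma \ref{lem: stationary phase} (whose $e(-1/8)$ factor comes precisely from the negative sign of $\Phi''$) therefore reproduces exactly the summand in $\cE^{(B)}_{q,u}$. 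I would verify the lemma's hypotheses with $\Lambda_\Phi\asymp e^{u+q\theta}$, $\Omega_\Phi\asymp e^q$, and $\Omega_\Psi\asymp e^q$ (via \eqref{N deriv}); the side condition $\Omega_\Phi/\Omega_\Psi\ll\log\Omega_\Phi$ reduces to $1\ll q$, which follows from $q\geq\delta Q$.

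For $r\notin\mathcal{R}_k$ the phase has no stationary point on $\supp\mathfrak{N}_q$; in fact $|\Phi_{k,r}'|\gg\max(|r|,ke^{(\theta-1)q})$ there, and repeated integration by parts yields $|I_{k,r}|\ll_M(|r|+ke^{(\theta-1)q})^{-M}$ for any $M$, so these $r$ contribute $O(N^{-A})$ for any $A>0$. The per-$r$ stationary-phase error $\Omega_\Phi/\Lambda_\Phi^{3/2+\varepsilon}\asymp e^{q-(3/2+\varepsilon)(u+q\theta)}$, summed over the $\asymp e^{u+(\theta-1)q}$ integers in $\mathcal{R}_k$, gives a per-$k$ error $\ll e^{-(1/2+\varepsilon)(u+q\theta)}$. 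Multiplying by $\frac{1}{N}\sum_k|\mathfrak{K}_u(k)||\wh f(k/N)|\ll e^u/N$ and using the non-degeneracy bound $u+q\theta\geq\delta\theta Q\gg\log N$ yields a total error of $O(N^{-c})$ for some $c=c(\delta,\theta)>0$, comfortably beating $N^{-\varepsilon}$.

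The main technical obstacle I anticipate is the uniform verification of the hypotheses of Lemma \ref{lem: stationary phase} across the whole non-degenerate regime. The two cases $q<Q$ and $q\geq Q$ have slightly different values of $\Omega_\Psi$ because of the finer-than-dyadic refinement of \eqref{N deriv} (indeed the reason that refinement was built into the partition of unity), and the side condition $\Omega_\Phi/\Omega_\Psi\ll\log\Omega_\Phi$ must be rechecked in the second case. Secondly, care is needed for $r$ near the boundary of $\mathcal{R}_k$, where the stationary point approaches the edge of $\supp\mathfrak{N}_q$: the smoothness and compact support of $\mathfrak{N}_q$ ensures $\Psi(a)=\Psi(b)=0$ as required by Lemma \ref{lem: stationary phase}, but the transition between $r\in\mathcal{R}_k$ and $r\notin\mathcal{R}_k$ must be handled so that no boundary terms are lost.
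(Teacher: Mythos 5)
Your main-term computation is sound and matches the paper's: the critical point $x_0=(\alpha\theta k/r)^{\Theta}$, the phase value $\phi(k,r)$, the factor $c_1k^{\Theta/2}/r^{(\Theta+1)/2}$ coming from $|\Phi''(x_0)|^{-1/2}$, the sign $e(-1/8)$ from $\Phi''<0$, and the choice $\Lambda_\Phi\asymp e^{u+q\theta}$, $\Omega_\Phi=\Omega_\Psi\asymp e^{q}$ with the summed stationary-phase error $\ll\Lambda_\Phi^{-1/2+O(\varepsilon)}$ per $k$. The genuine gap is your treatment of the non-stationary frequencies. The claim that $|\Phi_{k,r}'|\gg\max(|r|,ke^{(\theta-1)q})$ on $\mathrm{supp}\,\mathfrak{N}_q$ for every $r\notin\mathcal{R}_k$ is false for the transition values of $r$ just outside $\mathcal{R}_k$: at the endpoint $a$ of the support one has $\Phi_{k,r}'(a)=k\alpha\theta a^{\theta-1}-r$, and as $r$ runs over the integers this can be as small as $\Vert k\alpha\theta a^{\theta-1}\Vert$, which for unfavourable $k$ is arbitrarily close to $0$; repeated integration by parts then yields nothing, so these $r$ are not $O(N^{-A})$. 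You flag the boundary transition as an anticipated obstacle, but you never resolve it, and in the paper this is exactly where the work lies.

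Concretely, the paper's proof isolates the $O(N^{\varepsilon})$ values of $r$ with $0<\min_{[a,b]}|\Phi_r'|<N^{\varepsilon}$ and bounds each such integral by Lemma \ref{lem: van der Corput's lemma} (with $i=1$ and $i=2$) by $\min\bigl(\Vert \alpha\theta a^{\theta-1}k\Vert^{-1},\,e^{-(u+q(\theta-2))/2}\bigr)$. This cannot be summed trivially over $k$ in the sup norm, since $\Vert\alpha\theta a^{\theta-1}k\Vert^{-1}$ is unbounded for individual $k$; one needs the averaged estimate $\sum_{k\asymp e^{u}}\min\bigl(\Vert\omega k\Vert^{-1},\Omega^{-1}\bigr)\ll u\,e^{u}$, which, after multiplying by $N^{-1}$, suffices only when $u<(1-10\varepsilon)\log N$. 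When $u\geq(1-10\varepsilon)\log N$ a further argument is required: using the mean value theorem and monotonicity of $\Phi_r'$ one shows $\Phi_r'\gg N^{4\varepsilon}$ outside an interval of length $a^{1-\theta+16\varepsilon}$ around $a$, and the derivative bounds \eqref{N deriv} force $\Psi$ to be small there, gaining an extra factor $N^{-2\varepsilon}$. Without these steps (the van der Corput bound in the transition range, the arithmetic averaging over $k$, and the separate large-$u$ case) the claimed uniform $O(N^{-\varepsilon})$ bound is not established; your proposal's rapid-decay shortcut silently discards exactly these terms.
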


\begin{proof}
  Fix $k\asymp e^{\uab}$. Let $[a,b]:=\mathrm{supp}(\mathfrak{N}_{q})$, $\Phi_r(x) :=  k\alpha x^\theta - rx$, and $m(r):=\min \{\vert \Phi'_r(x)\vert: x\in [a,b]\}$. By Poisson summation and partial integration, 
  \begin{align*}
    \sum_{n\in \Z} 
    \mathfrak{N}_{q}(n)      
    e(k\alpha n^\theta )
    = \sum_{r\in \Z} \int_{-\infty}^{\infty}\mathfrak{N}_{q}(x)e( \Phi_r (x))\rd x
    = M(k) + O(N^{-100}+\mathrm{Err}(k)),
  \end{align*}
  where $M(k)$ (resp. $\mathrm{Err}(k)$) gathers the contribution of all $r\in \Z$ with $m(r)=0$ (resp. of $0< m(r)<N^{\varepsilon}$). Next, we evaluate $M(k)$. Taking $\Psi (x):= \mathfrak{N}_{q}(x), \Lambda_{\Phi_r}:= e^{\uab+q\theta}$, and $\Omega_{\Phi_r}=\Omega_{\Psi} :=  e^{q}$, Lemma \ref{lem: stationary phase} applies. The unique critical point $x_r$ of $\Phi_r$ is given by $x_r:=(\alpha\theta k/r)^{\Theta}$. Using $1+\theta \Theta = \Theta$ shows that $\Phi_r (x_r) =\phi(k,r)$ and
  \begin{align*}
    \abs{\Phi_r'' (x_r)} = \alpha k 
    \theta (\theta-1)
    \bigg(\frac{\alpha\theta k}{r}\bigg)^{(\theta-2)\Theta} =c_1^{-2}
    \frac{r^{\Theta+1}}{k^{\Theta}}.
  \end{align*}
  To ease notation, $\Phi(x):=\Phi_r(x)$.  Since $(u,q)$ is non-degenerate, $\Lambda_{\Phi}/\Omega_\Phi = e^{\uab+(\theta-1)q} >1/(10\alpha \theta)$. Thus
  \begin{equation}\label{eq: stationary phase main term}
    M(k) = c_1e(-1/8) \sum_{r\in \Z} 
    \mathfrak{N}_q\left(k,r\right) e(\phi(k,r)) + 
    O\big( \Lambda_{\Phi}^{-1/2+O(\varepsilon)} \big).
  \end{equation}
  To bound $\mathrm{Err}(k)$, notice $m(r) = \min(\vert \Phi_r'(a)\vert, \vert \Phi_r'(b)\vert)$. Hence there are $O(N^\varepsilon)$ many $r$ with $0< m(r)<N^{\varepsilon}$. By swapping $a$ and $b$, if needed, we have $m(r)=\vert \Phi_r'(a)\vert \geq \Vert \alpha \theta a^{\theta-1} k\Vert$. Lemma \ref{lem: van der Corput's lemma} (for $i=1,2$) yields
  \begin{align*}
    &\int_{-\infty}^{\infty} \Psi(x) e( \Phi_r (x))\rd x
    \ll 
    \min\bigg(\frac{1}{m(r)},\frac{1}{\sqrt{e^{u+q(\theta-2)}}}\bigg), \qquad \mbox{ thus }\\
    &\mathrm{Err}(k) \ll N^{\varepsilon}
    \min\bigg(\frac{1}{\Vert \alpha \theta a^{\theta-1} k\Vert},
    \frac{1}{\sqrt{e^{\uab+q(\theta-2)}}}\bigg).
  \end{align*}
  Next we observe that whenever $\omega,\Omega>0$ satisfy $0<10\omega<\Omega<1/10$, then 
  \begin{align*}
    \sum_{k\asymp e^{\uab}}
    \min\bigg(\frac{1}{\Vert \omega k\Vert},
    \frac{1}{\Omega}\bigg) \ll \uab e^{\uab}.
  \end{align*}
  Here, we take $\omega:= \alpha \theta a^{\theta-1}$ and $\Omega:= e^{\frac{u+q(\theta-2)}{2}}$. Combining the previous two bounds implies 
  \begin{equation}\label{eq: low frequency error term}
    \sum_{k\asymp e^{\uab}}\mathrm{Err}(k)=O(N^{1-10\varepsilon}), \,\,
    \mathrm{provided}\,\, \uab < (1-10\varepsilon)\log N.
  \end{equation}
  
  On the other hand, suppose $\uab \geq (1-10\varepsilon)\log N$. The mean value theorem gives us the lower bound $\Phi^\prime(a+a^{1-\theta +16\varepsilon}) - \Phi^\prime(a) \gg ka^{-1+16\varepsilon} \gg N^{4\varepsilon}$. Thus, by monotonicity, $\Phi^\prime(x) \gg N^{4\varepsilon}$ for $x \in [a+a^{1-\theta+16\varepsilon}, b]$. Due to \eqref{N deriv}, we infer $\Psi(a+a^{1-\theta+16\varepsilon}) \ll e^{- q(\theta-16\varepsilon)}$. Hence
  \begin{align*}
    \int_{-\infty}^{\infty}\Psi(x)e( \Phi_r (x))\rd x
    & \ll
    \int_{a}^{a+a^{1-\theta+16\varepsilon}} \Psi(x)e( \Phi_r (x)) \rd x
    +N^{-3\varepsilon}\\
    & \ll 
    N^{-2\varepsilon}
    \bigg(\min\bigg(\frac{1}{\Vert \alpha \theta a^{\theta-1} k\Vert},
    \frac{1}{\sqrt{e^{\uab+q(\theta-2)}}}\bigg)+1 \bigg).
  \end{align*}
  Arguing as before, we conclude
  \begin{equation}\label{eq: high frequency error term}
    \sum_{k\asymp e^{\uab}} \mathrm{Err}(k)=O(N^{1-\varepsilon}), \,\,
    \mathrm{provided}\,\, (1-10\varepsilon)\log N \leq \uab \ll \log N.
  \end{equation}
  The proof is completed by summing \eqref{eq: stationary phase main term}, \eqref{eq: low frequency error term}, and \eqref{eq: high frequency error term} against $ N^{-1} \mathfrak{K}_{u}(k)  \wh{f}( k/N) e(ks)$ for $k\geq 0$.
\end{proof}

  \subsection{Second Application of the $B$-Process}

  Next we apply the $B$-process to shorten $k$-summation within $\cE^{(B)}_{q,u}$. To this end, define (for $u>0$) 
\begin{align*}
  \cE^{\mathrm{(BB)}}_{q,u}(s) := 
  \frac{c_1}{N}\sum_{r \geq 0}
  \sum_{h \geq 0} 
  \widehat{f}\left(\frac{\mu}{N}\right) 
  \mathfrak{N}_{q}(r,h)
  \mathfrak{K}_{u} (\mu )
  \frac{\mu^{\Theta/2}}{\sqrt{\phi_{\mu\mu}(\mu,r)}}
  e(c (h- s)^{1/\theta}r)
\end{align*}
where 
\begin{gather}\label{mu def}
  \begin{gathered}
    \phi_{\mu\mu}(\mu,r):= \frac{\partial^2}{\partial h^2}\phi(h,r)\Big\vert_{h=\mu},
    \qquad \qquad
    \mathfrak{N}_{q}(h,s) := \mathfrak{N}_{q}(      (\alpha\theta c_0  (h-  s)^{1/(\Theta-1)})^{\Theta}),\\
    \mu:=\mu(h,r,s) := c_0 r (h-  s)^{1/(\Theta-1)}
  \end{gathered}
\end{gather}
and where the two constants $c,c_0$, depend only on $\alpha$, and $\theta$ but do not play a role in what follows.
Then we have the following lemma

\begin{lemma} \label{B proc twice}
  If $u>0$, then
   $   \Vert \cE^{\mathrm{(BB)}}_{q,u} - \cE^{\mathrm{(B)}}_{q,u} \Vert_{\infty} 
=O(N^{-\varepsilon})$
  uniformly for any non-degenerate $(u,q)\in \mathscr{G}(N)$.
\end{lemma}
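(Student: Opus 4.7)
The plan is to mimic the proof of Lemma \ref{lem:E B triple}, now with the roles of the summation variables interchanged: fix $r$ (and $s$) and apply Poisson summation to the $k$-sum in $\cE^{(B)}_{q,u}(s)$. After swapping the order of summation, the relevant $k$-sum
\[
\sum_{k} \Psi(k)\, e\!\bigl(\phi(k,r) + ks\bigr),\qquad
\Psi(k) := \mathfrak{K}_u(k)\wh{f}(k/N)\mathfrak{N}_q\!\bigl((\alpha\theta k/r)^\Theta\bigr)\frac{k^{\Theta/2}}{r^{(\Theta+1)/2}},
\]
becomes $\sum_h \int \Psi(k)\, e(\Phi_h(k))\,\mathrm{d}k$ with phase $\Phi_h(k) := \phi(k,r) + (s-h)k$. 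The critical-point equation $\beta\Theta k^{\Theta-1} r^{1-\Theta} = h-s$ has the unique solution $\mu = c_0 r (h-s)^{1/(\Theta-1)}$ (with appropriate sign), and a direct computation exploiting $1 + 1/(\Theta-1) = 1/\theta$ shows $\Phi_h(\mu) = \pm c\, r(h-s)^{1/\theta}$, matching the phase appearing inside $\cE^{(BB)}_{q,u}$. The stationary-phase amplitude at $\mu$ produces the factor $\mu^{\Theta/2}/\sqrt{\phi_{\mu\mu}(\mu,r)}$ and the $\mathfrak{N}_q(h,s)$-factor that characterizes the active $h$-values.

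Next, apply Lemma \ref{lem: stationary phase} to each $h$-integral whose critical point lies in $\mathrm{supp}(\mathfrak{K}_u)$. A short calculation (using that the $\mathfrak{N}_q$-factor forces $r \asymp e^{u+q(\theta-1)}$) gives $\Lambda_\Phi \asymp e^{u+q\theta}$, $\Omega_\Phi \asymp e^u$, and, after chain-rule differentiation of $\mathfrak{N}_q((\alpha\theta k/r)^\Theta)$, $\Omega_\Psi \asymp e^u$ as well; the second-derivative lower bound $|\Phi_h''| \gg e^{-u+q\theta} \asymp \Lambda_\Phi/\Omega_\Phi^2$ also holds. The non-degeneracy assumption $q \ge \delta Q$ guarantees $\Lambda_\Phi \gg N^{\theta\delta}$, so all hypotheses of the stationary phase lemma are satisfied with room to spare. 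The main term assembles, after reinserting the sum over $r$ and the prefactor $c_1/N$, into precisely $\cE^{(BB)}_{q,u}(s)$, while each integral contributes a stationary-phase error of size $O(\Omega_\Phi/\Lambda_\Phi^{3/2+O(\varepsilon)})$.

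For the remaining dual frequencies $h$, for which $\Phi_h$ has no critical point on $\mathrm{supp}(\Psi)$, I would apply Lemma \ref{lem: van der Corput's lemma} with $i=1$ or $i=2$ and sum the resulting $\min(|\Phi_h'|^{-1},|\Phi_h''|^{-1/2})$ bounds across $h$ via a Kusmin--Landau-style estimate, exactly as in the treatment of $\mathrm{Err}(k)$ at the end of Lemma \ref{lem:E B triple}. The boundary regime $u \approx \log N$, where the tight-than-dyadic partition inflates the $\Psi$-derivative bounds by a factor of $Q$, is addressed by truncating the integration interval near the endpoints by a scale $k^{1-\Theta+16\varepsilon}$ and exploiting the then-monotonic growth of $\Phi_h'$, mirroring the high-frequency argument in Lemma \ref{lem:E B triple}. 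Summing all errors over the $O(e^{u+q(\theta-1)})$ active $r$-values and multiplying by $c_1/N$ then yields the claimed bound $O(N^{-\varepsilon})$; the delicate bookkeeping here is the main obstacle, since the saving per $k$-integral must survive simultaneously the dual-variable sum, the outer $r$-sum, and the boundary inflation, and verifying that the non-degeneracy condition $\alpha\theta e^{u+(\theta-1)q} > 1/10$ is exactly strong enough to place $\mu$ safely inside $\mathrm{supp}(\mathfrak{K}_u)$ is the crucial link with the first application of the $B$-process.
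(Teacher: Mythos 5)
Your proposal follows essentially the same route as the paper's proof: Poisson summation in $k$ for fixed $r\asymp e^{u+q(\theta-1)}$, stationary phase (Lemma \ref{lem: stationary phase}) at the critical point $\mu$ with $\Lambda_\Phi = e^{u+q\theta}$ and $\Omega_\Phi=\Omega_\Psi=e^{u}$ to recover $\cE^{\mathrm{(BB)}}_{q,u}$, then van der Corput's lemma together with an endpoint truncation for the near-boundary dual frequencies $h$, and finally summation of all errors over $r$ against the prefactor $N^{-1}r^{-(\Theta+1)/2}$. The only part left as a plan rather than carried out is the final bookkeeping, which in the paper amounts to $\mathrm{Err}(r)\ll N^{-\varepsilon}e^{(u-q\theta)/2}$ and, after the $r$-sum, to a total error $\ll e^{u}N^{-1-\varepsilon}\ll N^{-\varepsilon}$ --- exactly along the lines you sketch (and note the non-degeneracy condition is used to make the stationary-phase parameters admissible, rather than to place $\mu$ inside $\operatorname{supp}(\mathfrak{K}_u)$, which is enforced by the factor $\mathfrak{K}_u(\mu)$ itself).
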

\begin{proof}
Fix $r\asymp e^{u+q(\theta-1)}$. For ease of exposition, let
\begin{gather*}
  g(k):= \wh{f}\big(k/N\big) \left(\frac{k}{e^{u}}\right)^{\Theta/2},
  \qquad \qquad
  \Psi(x):= \mathfrak{K}_{u}(x) \mathfrak{N}_{q}((\alpha\theta k/r)^{\Theta}) g(x),\qquad\qquad
  \Phi_h(x) :=  \phi(x,r) -  x(h -  s),
\end{gather*}
and $m(h):=\min \{\vert \Phi'_h(x)\vert: x\in [a,b]\}$. By Poisson summation
  \begin{align*}
    \sum_{k\geq 0} \mathfrak{K}_{ u}(k) \wh{f}\Big(\frac{ k}{N}\Big) e( ks)\mathfrak{N}_{q}((\alpha\theta k/r)^{\Theta}) k^{\frac{\Theta}{2}} e(\phi(k,r))
    =e^{u\Theta/2}\sum_{h\in \Z }\int_{\R} \Psi(x) e\left( \Phi_h(x)\right) \,\mathrm{d}x.
\end{align*}
By partial integration the right hand side equals
\begin{align*}
M(r) + O(N^{-100}+\mathrm{Err}(r))
\end{align*}
  where $M(r)$ (resp. $\mathrm{Err}(r)$) gathers the contribution of all $h\in \Z$ with $m(h)=0$ (resp. of $0< m(h)<e^{{\abs{u}}\varepsilon}$).

  We evaluate $M(r)$ by Lemma \ref{lem: stationary phase} (by scaling the amplitude by a constant factor) with the specifications
  \begin{align*}
    \Lambda_\Phi:= e^{\uab +q\theta}, \qquad
    \Omega_\Phi = \Omega_{\Psi} :=  e^{\uab}.
  \end{align*}
  Note that $\mu$ is the unique critical point of
  $\Phi_h$. An application of Lemma \ref{lem: stationary phase} implies (note that $\beta <0$, thus the phase is negative)
  \begin{align*}
    M(r) =   e(1/8)
  \sum_{h\in \Z} 
  \widehat{f}\left(\frac{\mu}{N}\right) 
  \mathfrak{N}_{q}(r,s)
  \mathfrak{K}_{u} (\mu )
  \frac{\mu^{\Theta/2}}{\sqrt{\phi_{\mu\mu}(\mu,r)}}
  e(c (h-s)^{1/\theta}r) + O(e^{-\uab/2-3q\theta/2}).
  \end{align*}
  To estimate $\mathrm{Err}(r)$, we proceed as in the proof of Lemma \ref{lem:E B triple}. First, we observe that if $h$ is so that  $0< m(h)<e^{\uab \varepsilon}$ then the critical point $\mu$ is near one of the boundary points $a,b$. By possibly interchanging their roles, we can assume $\mu$  is near $a$, i.e. $m(a)=\vert \Phi_h(a)\vert$. Note that $\vert \Phi'_h(x)\vert\gg e^{5{\uab}\varepsilon}$ on the interval $[a+a^{1-5\varepsilon},b]$ and that $\Psi(a+a^{1-2\varepsilon}) \ll e^{-2\varepsilon}$. Hence, by Lemma \ref{lem: van der Corput's lemma} shows
  \begin{align*}
    \mathrm{Err}(r) \ll  \frac{N^{-\varepsilon}}{\sqrt{e^{-\uab+q\theta}}}.
  \end{align*}
  Thus
  \begin{align*}
    e^{u\Theta/2}\frac{1}{N}\sum_{r\asymp e^{u+q(\theta-1)}} 
    r^{-\frac{\Theta+1}{2}}
    \Err(r)\ll \frac{1}{N} e^{\uab+q(\theta-1)}  \frac{1}{\sqrt{e^{\uab+q(\theta-2)}}}
    \frac{N^{-\varepsilon}}{\sqrt{e^{-\uab +q\theta}}}
    = \frac{e^{\uab}}{N^{1+\varepsilon}} \ll N^{-\varepsilon}.
  \end{align*}
  Summing
   $ M(r)c_1 e(-1/8) N^{-1} r^{-\frac{\Theta+1}{2}}$, 
  over $r\asymp e^{u+q(\theta-1)}$ finishes the proof.
\end{proof}

We summarise how the previous lemmas transform \eqref{eq: cal F reduced}, for which let $\sigma_i := \sigma(u_i) := \frac{u_i}{\abs{u_i}}$ and $\vect{\sigma}:=(\sigma_1,\sigma_2, \dots ,\sigma_m)$. Combining \eqref{eq: conjugation} and Lemma \ref{lem:E B triple} yields  
\begin{align*}
\mathcal{F}(N) & =   \bigg\Vert \sum_{\substack{(u,q) \in \mathscr{G}(N)\\ u>0}}\cE_{q,u}
+  \sum_{\substack{(u,q) \in \mathscr{G}(N)\\ u>0}} \overline{\cE_{q,u}}
 \bigg\Vert_{L^m}^m \\
 & = \bigg\Vert \sum_{\substack{(u,q) \in \mathscr{G}(N)\\ u>0}}\cE^{(B)}_{q,u}
+  \sum_{\substack{(u,q) \in \mathscr{G}(N)\\ u>0}} \overline{\cE^{(B)}_{q,u}}
 \bigg\Vert_{L^m}^m  + O(N^{-\varepsilon/2}).
\end{align*}
Using Lemma \ref{B proc twice} and expanding the $m^{th}$-power gives
\begin{equation}\label{eq: cal F B-processed}
\mathcal{F}(N) = \sum_{\sigma_1,\ldots,\sigma_m\in \{\pm 1\} }
\sum_{\substack{(u_i,q_i) \in \mathscr{G}(N)\\ u_i>0}}
\int_{0}^{1} 
\prod_{\substack{i\leq m\\ \sigma_i >0}} 
\cE^{\mathrm{(BB)}}_{q_i, u_i}(s)
\prod_{\substack{i\leq m\\ \sigma_i <0}} 
\overline{\cE^{\mathrm{(BB)}}_{q_i, u_i}(s)}
 \rd s
+ O(N^{-\varepsilon/2}).
\end{equation}
To simplify this expression, for a fixed $\vect{u}$ and $\vect{q}$, and $\vect{\mu}=(\mu_1,\ldots,\mu_m)$ we define the function
$\mathfrak{K}_{\vect{u}}(\vect{\mu}):= 
   \prod_{i\leq m}\mathfrak{K}_{u_i}(\mu_i) $.
The functions $ \mathfrak{N}_{\vect{q}}\left(\vect{\mu},s\right)$
and $\wh{f}(\vect{\mu}/N)$ are defined in the same fashion.
Aside from the error term, the right hand side of 
\eqref{eq: cal F B-processed} splits into a sum over
    \begin{align*}
      \cF_{\vect{q},\vect{u}} := \frac{c_1^m}{N^m} 
      \sum_{\vect{r} \in \Z^m}(r_1r_2\cdots r_m)^{-(\Theta+1)/2}
      \int_0^1\sum_{\vect{h}\in \Z^m }
      \mathfrak{K}_{\vect{u}}(\vect{\mu}) 
      \mathfrak{N}_{\vect{q}}\left(\vect{\mu},s\right) 
      A_{\vect{h},\vect{r}}(s) 
      e\left( \varphi_{\vect{h},\vect{r}}(s)\right) \mathrm{d}s 
    \end{align*}
where the phase function is given by
  \begin{align*}
    \varphi_{\vect{h},\vect{r}}(s) := c \left(\sigma_1 (h_1 - s)^{1/\theta}r_1 + \sigma_2(h_2 - s)^{1/\theta}r_2 +\dots +  \sigma_m(h_m - s)^{1/\theta}r_m\right)
  \end{align*}
  and the amplitude function is
  \begin{align*}
    A_{\vect{h},\vect{r}}(s) : =  \wh{f}\left(\frac{ \vect{\mu}}{N}\right)\frac{(\mu_1\mu_2\cdots\mu_m)^{\Theta/2}}{\sqrt{\abs{\phi_{\mu\mu}(\mu_1,r_1)\phi_{\mu\mu}(\mu_2,r_2)\cdots\phi_{\mu\mu}(\mu_m,r_m)}}}.
  \end{align*}
  Note that the argument of $\wh{f}$ should be $(\mu_1\sigma_1, \dots, \mu_m\sigma_m)$ however to simplify matters we can assume (w.l.o.g) $f$ is even. Now to analyse these transformed sums, we distinguish between two cases. First, what we call the set of all $(\vect{r},\vect{h})$ the \emph{diagonal}, which is when the phase $\varphi_{\vect{h},\vect{r}}(s)$ vanishes identically. Let
  \begin{align*}
    \mathscr{A} : = \{ (\vect{r}, \vect{h}) \in \N\times \N : 
    \varphi_{\vect{h},\vect{r}}(s) = 0, \forall s \in [0,1] \},
  \end{align*}
  and let
  \begin{align*}
    \eta(\vect{r},\vect{h}):=\begin{cases}
    1 & \mbox{ if } (\vect{r},\vect{h}) \not\in \mathscr{A} \\
    0 & \mbox{ if } (\vect{r},\vect{h}) \in \mathscr{A}.
    \end{cases}
  \end{align*}
 The diagonal, as we show, contributes the main term, while
 the off-diagonal contribution is negligible (see the penultimate section).

  \section{Extracting the Diagonal}

  First, we establish an asymptotic for  the diagonal. To ease the notation, the below sums range over $\vect{q} \in [2Q]^m$, $\vect{u}\in [-U,U]$, and $\vect{r},\vect{h}\in \Z$, 
    \begin{align*}
      \cD_N &=  \frac{c_1^m}{N^m}\sum_{\vect{q},\vect{u},\vect{r},\vect{h} }(1-\eta(\vect{r},\vect{h}))(r_1r_2 \cdots r_m)^{-(\Theta+1)/2}   \int_0^1 \mathfrak{K}_{\vect{u}}(\vect{\mu}) \mathfrak{N}_{\vect{q}}\left(\vect{\mu},s\right) A_{\vect{h},\vect{r}}(s) e\left( \varphi_{\vect{h},\vect{r}}(s)\right) \mathrm{d}s  \\
      &= \frac{c_1^m}{N^m}\sum_{\vect{q},\vect{u},\vect{r},\vect{h} }(1-\eta(\vect{r},\vect{h}))(r_1r_2\cdots r_m)^{-(\Theta+1)/2}  \int_0^1 \mathfrak{K}_{\vect{u}}(\vect{\mu}) \mathfrak{N}_{\vect{q}}\left(\vect{\mu},s\right) A_{\vect{h},\vect{r}}(s)  \mathrm{d}s
    \end{align*}
    note that the phase function is $\varphi_{\vect{h},\vect{r}} $ 
    uniformly $0$ on the diagonal. 

    \begin{lemma}\label{lem:diag}
      We have
      \begin{align}\label{diag}
       \lim_{N\rightarrow \infty} \cD_N = 
       \sum_{\cP\in \mathscr{P}_m} 
       \mathbf{E}(f^{\abs{P_1}})\cdots \mathbf{E}(f^{\abs{P_d}}).
      \end{align}
      where the sum is over all non-isolating partitions of $[m]$, 
      which we denote $\cP = (P_1, \dots, P_d)$.
    \end{lemma}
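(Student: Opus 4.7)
The plan is in three stages: characterize the diagonal set $\mathscr{A}$, decompose $\cD_{N}$ according to the induced combinatorial partition, and evaluate each block's contribution.

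\textbf{Step 1 (Characterising $\mathscr{A}$).} First I would show that the family $\{(h-s)^{1/\theta}\}_{h\in \Z}$ is linearly independent as functions of $s\in[0,1]$. When $1/\theta$ is non-integer this follows from iterated differentiation together with the fact that each term acquires a distinct branch-type singularity at $s=h$; when $1/\theta$ is a positive integer, the $d\leq m$ relevant polynomials have degree $1/\theta>m^2+m-1>m$ and are independent by Vandermonde. Grouping indices by the common $h_i$-value, the identity $\varphi_{\vect{h},\vect{r}}\equiv 0$ is therefore equivalent to requiring $\sum_{i:h_{i}=h^{\ast}}\sigma_{i}r_{i}=0$ for every distinct value $h^{\ast}$ attained. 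This defines a partition $\cP$ of $[m]$, and since the dyadic cutoffs $\mathfrak{K}_{u_i},\mathfrak{N}_{q_i}$ enforce $r_{i}\geq 1$, no singleton block can arise. Hence only non-isolating partitions contribute:
\begin{equation*}
\cD_{N}=\sum_{\cP\in\mathscr{P}_{m}}\cD_{N}^{(\cP)},
\end{equation*}
where $\cD_{N}^{(\cP)}$ collects configurations whose block-structure is exactly $\cP$.

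\textbf{Step 2 (Factorising the amplitude).} Next, using the identity $\Theta/(\Theta-1)=1/\theta$ together with the explicit form $\sqrt{|\phi_{\mu\mu}(\mu,r)|}=c_{1}^{-1}r^{(\Theta+1)/2}\mu^{-\Theta/2}$, I would rewrite
\begin{equation*}
(r_{1}\cdots r_{m})^{-(\Theta+1)/2}A_{\vect{h},\vect{r}}(s)=c_{1}^{m}\wh{f}(\vect{\mu}/N)\prod_{i=1}^{m}\frac{\mu_{i}^{\Theta}}{r_{i}^{\Theta+1}}.
\end{equation*}
Summing the dyadic partitions over $\vect{q},\vect{u}$ returns (at leading order) the plain indicator of $\mu_{i}\in[1,N^{1+\varepsilon}]$ and of the admissible range for $h_i-s$. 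I would then parametrise $\cD_{N}^{(\cP)}$ by choosing pairwise-distinct block-values $h_{j}^{\ast}$ together with constrained $(r_{i})_{i\in P_{j}}$. Because $\mu_{i}=c_{0}r_{i}(h_j^{\ast}-s)^{1/(\Theta-1)}$ depends only on within-block data, the amplitude factorises blockwise, and the only inter-block coupling is through the shared variable $s$.

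\textbf{Step 3 (Each block yields $\mathbf{E}(f^{|P_j|})$).} For each block $P_{j}$ of size $\ell_{j}$ I would perform the substitution $t=h_{j}^{\ast}-s$, pass the discrete $h_{j}^{\ast}$- and $r_{i}$-sums to Riemann integrals using the derivative bounds \eqref{N deriv}, and solve the linear constraint $\sum_{i\in P_{j}}\sigma_{i}r_{i}=0$ by eliminating one variable. A direct computation identifies the resulting block integral with the $\ell_j$-fold Parseval-type evaluation of $\wh{f}$ which, after reversing the Fourier/$B$-process normalisations, equals $\mathbf{E}(f^{\ell_{j}})$. Crucially this limiting value is independent of $s$, so the residual $s$-integral decouples the blocks and returns the scalar $1$, giving
\begin{equation*}
\cD_{N}^{(\cP)}=\prod_{j=1}^{d}\mathbf{E}(f^{|P_{j}|})+o(1).
\end{equation*}
Summing over $\cP\in\mathscr{P}_{m}$ then yields \eqref{diag}.

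\textbf{Main obstacle.} The principal difficulty lies in Step 3: rigorously performing the discrete-to-continuous passage in the constrained $r$-sum, identifying the resulting block integral with $\mathbf{E}(f^{|P_j|})$ via a Parseval-type identity obtained by reversing the two $B$-processes, and controlling the boundary corrections from the smooth dyadic partitions of unity. A secondary delicate point is verifying that the sub-dominant cases (the ``distinctness'' constraint between the $h_{j}^{\ast}$'s across blocks, and the mild difference between $\sum_u \mathfrak{K}_u^2$ and an indicator) cost only $o(1)$, so that the partition structure from Step 1 is the one that actually materialises in the limit.
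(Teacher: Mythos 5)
Your proposal follows essentially the same route as the paper: you characterise the diagonal $\mathscr{A}$ as the $\cP$-adjusted configurations attached to non-isolating partitions (blockwise equal $h$'s and signed $r$-sums vanishing), factor the amplitude blockwise using the explicit form of $\phi_{\mu\mu}$ so the normalising constants cancel, and evaluate each block by passing the constrained $r$- and $h$-sums to integrals and identifying the result with $\mathbf{E}(f^{\abs{P}})$ via the convolution/Parseval identity for $\wh{f}$ -- exactly the steps the paper carries out with Euler summation and the identity $\int_{\R^{\abs{P}-1}}\wh{f}(x_1,\dots,x_{\abs{P}-1},-\vect{x}\cdot\vect{1})\,\rd\vect{x}=\expect{f^{\abs{P}}}$. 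The "main obstacle" you flag is precisely what the paper resolves by those two devices, so the plan is sound and matches the published argument.
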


    \begin{proof}
      First, we note that in $\cD_N$, we have the factor
      \begin{align*}
        \sum_{\vect{u}\in \Z^m} \mathfrak{K}_{\vect{u}}(\vect{\mu}) \wh{f} \left(\frac{\vect{\mu}}{N}\right)
      \end{align*}
      but recall that $\sum_{\vect{u}\in \Z^m} \mathfrak{K}_{\vect{u}}(\vect{\mu}) = 1$ if $\mu_i \ll N^{1+\varepsilon}$ for $i=1,2,\dots, m $. Thus, by the fast decay of $\widehat{f}$, we can add back in the larger $\vect{\mu}$ contributions (although, note that we have extracted the $\abs{\mu_i} <1/2$ contribution):
    \begin{align*}
      \cD_N 
      &= \frac{c_1^m}{N^m}\sum_{\vect{q},\vect{r},\vect{h} } \one(\abs{\mu_i} >0)(1-\eta(\vect{r},\vect{h}))(r_1r_2\dots r_m)^{-(\Theta+1)/2}   \int_0^1 \mathfrak{N}_{\vect{q}}\left(\vect{\mu},s \right) A_{\vect{h},\vect{r}}(s)  \mathrm{d}s.
    \end{align*}
    Since $\eta \neq 1$, we have that $(\vect{r},\vect{h}) \in \mathscr{A}$. That is $\varphi_{\vect{r},\vect{h}}(s)=0$. Looking at the definition, this happens precisely in the following situation: let $\cP$ be a non-isolating partition of $[m]$, we say a vector $(\vect{r},\vect{h})$ is $\cP$\emph{-adjusted} if for every $P \in \cP$ we have: $h_i = h_j$ for all $i, j \in \cP$, and $\sum_{i\in P} r_i = 0$. The diagonal is restricted to $\cP$-adjusted vectors. Now 
    \begin{align*}
      \chi_{\cP,1}(\vect{r})
      :=
      \begin{cases}
        1 & \mbox{ if $\sum_{i\in P} r_i =0$ for each $P \in \cP$}\\
        0 & \mbox{ otherwise,}
      \end{cases},
      \qquad\qquad
      \chi_{\cP,2}(\vect{h})
      :=
      \begin{cases}
        1 & \mbox{ if $h_i =h_j$ for $i,j \in P\in \cP$}\\
        0 & \mbox{ otherwise,}
      \end{cases}
    \end{align*}
    here $\chi_{\cP,1}(\vect{r})\chi_{\cP,2}(\vect{h})$ encodes the condition that $(\vect{r},\vect{h})$ is $\cP$ adjusted.
    
    Unpacking the definition of $A_{\vect{h},\vect{r}}(s)$ gives (note that $\vect{\mu} =\vect{\mu}(s)$)
    \begin{align*}
      \cD_N 
      &= \frac{1}{N^m}\frac{c_1^m}{(\beta\Theta(\Theta-1))^{m/2}}\sum_{\cP\in \mathscr{P}_m} \sum_{\vect{q},\vect{r}, \vect{h} }\chi_{\cP,1}(\vect{r})\chi_{\cP,2}(\vect{h})(r_1r_2\cdots r_m)^{-1}\\
      &\phantom{++++++++++++++++++++++}\left( \int_0^1 \mathfrak{N}_{\vect{q}}\left(\vect{\mu}, s\right)\wh{f}\left(\frac{ \vect{\mu}}{N}\right) \mu_1\mu_2\cdots \mu_m  \mathrm{d}s\right) +o(1).
    \end{align*}
    First note that the constant prefactor: 
    \begin{align*}
      \frac{c_1}{(\beta\Theta(\Theta-1))^{1/2}} = \frac{((\alpha\theta)^{\Theta}\Theta)^{1/2}}{(\alpha^{\Theta}(\theta^{\Theta-1}(1-\theta)\Theta(\Theta-1))^{1/2}} = 1. 
    \end{align*}
    Now inserting the definition of $\mu_i$ gives
    \begin{align*}
      \cD_N 
      &= \frac{1}{N^m}\sum_{\cP\in \mathscr{P}_m}\sum_{\vect{q},\vect{r}, \vect{h} } \chi_{\cP,1}(\vect{r})\chi_{\cP,2}(\vect{h})  \int_0^1 \mathfrak{N}_{\vect{q}}\left(\vect{\mu},s\right)\wh{f}\left(\frac{\vect{\mu}}{N}\right)   \prod_{i=1}^m\left(c_0 (h_i-s)^{1/(\Theta-1)}\right)  \mathrm{d}s +o(1).
    \end{align*}
    Now note that the $\vect{r}$ variable only appears in $\wh{f}\left(\vect{\mu}/N\right)$, that is
    \begin{align}\label{D intermediate}
      \begin{aligned}
      \cD_N 
      &= \frac{1}{N^m}\sum_{\cP\in \mathscr{P}_m}
      \sum_{P \in \cP}
      \sum_{\vect{q}, h }  
       \int_0^1 \mathfrak{N}_{\vect{q},P}\left(h\right)
      c_0^{\abs{P}} h^{\frac{\abs{P}}{\Theta-1}}
      \sum_{\substack{\vect{r}\in \Z^{\abs{P}}\\r_i \neq 0}}
      \chi(\vect{r}) \wh{f}\left(\frac{
      c_0 h^{\frac{1}{\Theta-1}} }{N} 
      \vect{r}\right)     \mathrm{d}s(1+ o(1)),
      \end{aligned}
    \end{align}
    where $\chi(\vect{r})$ is $1$ if $\sum_{i=1}^{\abs{P}} r_i = 0$ 
    and where $\mathfrak{N}_{\vect{q},P}(h) = \prod_{i\in P}\mathfrak{N}_{q_i} (\abs{ \alpha \theta c_0 (h -  s)^{1/\Theta -1 }}^\Theta ) $. Focusing on the sums in $r_1$ and $r_2$, we can apply Euler's summation formula (\cite[Theorem 3.1]{Apostol1976}) to conclude that
    \begin{align*}
      \sum_{\substack{\vect{r}\in \Z^{\abs{P}}\\r_i \neq 0}}\chi(\vect{r}) \wh{f}\left(\frac{c_0(h-s)^{1/(\Theta-1)} }{N} \vect{r}\right)   
      =  
      \int_{\R^{\abs{P}}} \chi(\vect{x})\wh{f}\left(\frac{c_0h^{1/(\Theta-1)}}{N} \vect{x}\right)\mathrm{d}\vect{x}\left(1+ o(1) \right).
    \end{align*}
    Because of the condition imposed by $\mathfrak{N}_{\vect{q},P}(\vect{h})$ 
    we have $h_i^{1/\theta} \ll N$ for every $i = 1, \dots, d$, 
    therefore $h_i^{1/(\Theta-1)} \ll N^{1-\theta}$. Changing variables 
    $\vect{x}\mapsto h (c_0^{\frac{-1}{\Theta-1}}N)^{-1}\vect{x}$
    yields
    \begin{align*}
      & \int_{\R^{\abs{P}}} \chi(\vect{x})\wh{f}\left(\frac{c_0 h^{1/(\Theta-1)}}{N} \vect{x}\right)\mathrm{d}\vect{x}
      =\frac{N^{\abs{P}-1}}{c_0 (h_i-s)^{(\abs{P}-1)/(\Theta-1)}}  \int_{\R^{\abs{P}}} \chi_{\cP}(\vect{x}) \wh{f}\left(\vect{x}\right)\mathrm{d}\vect{x} \left(1+ O\left(N^{-\theta}\right)\right)
    \end{align*}
    here we have used that, because of $\chi(\vect{x})$, we have $x_{\abs{P}} = -\sum_{i=1}^{\abs{P}-1}x_i$ and is therefore fixed. This is why the leading factor is taken to the $\abs{P}-1$ power. Plugging this into our \eqref{D intermediate} gives
    \begin{align*}
      \cD_N 
      &= \frac{1}{N^d}\sum_{\cP\in \mathscr{P}_m}\sum_{P \in \cP}
      \sum_{\vect{q}, h }    \mathfrak{N}_{\vect{q},P}\left(h\right)\left(c_0 h^{1/(\Theta-1)}\right) 
      \int_{\R^{\abs{P}-1}} \wh{f}(x_1, \dots, x_{\abs{P}-1}, -\vect{x}\cdot \vect{1}) \rd \vect{x} (1+ o(1))
    \end{align*}
    We claim that the quantity in the first line is exactly $1+o(1)$.

    By the Euler's summation formula
    \begin{align*}
      \sum_{\vect{q}, h }  \bigg( \mathfrak{N}_{\vect{q}}\left(h-s\right)c_0 (h-s)^{1/(\Theta-1)}\bigg) &= \theta \left(\frac{N (\beta \Theta)^{1/\theta}/(\alpha\theta)^{\Theta }}{(\beta\Theta)^{1/(\Theta-1)}}\right)(1+o(1))\\
      &= N \theta \left(\frac{ \beta \Theta}{(\alpha\theta)^{\Theta }}\right)(1+o(1))=  N(1+o(1))
    \end{align*}
    Thus, we arrive at
    \begin{align*}
      \cD_N 
      &= \sum_{\cP\in \mathscr{P}_m}\sum_{P \in \cP}
      \left( \int_{\R^{\abs{P}-1}} \wh{f}(x_1, \dots, x_{\abs{P}-1}, -\vect{x}\cdot \vect{1}) \rd \vect{x} \right)(1+ o(1))
    \end{align*}
    Finally consider
    \begin{align*}
      \int_{\R^{\abs{P}-1}} \wh{f}(x_1, \dots, x_{\abs{P}-1}, -\vect{x}\cdot \vect{1}) \rd \vect{x}
      =
      \int_{\R^{\abs{P}-1}} \wh{f}(x_1)\wh{f}( x_{\abs{P}-1})\wh{f}( -\vect{x}\cdot \vect{1}) \rd \vect{x}
  \end{align*}
  If we focus on the integral in $x_1$, this is simply a convolution of Fourier transforms, using that the convolution of Fourier transforms is the Fourier transform of the same functions multiplied together we conclude that
  \begin{align*}
    \int_{\R^{\abs{P}-1}} \wh{f}(x_1, \dots, x_{\abs{P}-1}, -\vect{x}\cdot \vect{1}) \rd \vect{x}
    =
    \expect{f^{\abs{P}}}
  \end{align*}
  which leads exactly to \eqref{diag}.

      \end{proof}

  \section{Bounding the Off-Diagonal}

  It remains to bound the off-diagonal contribution, for fixed $\vect{r}$ we thus want to bound
  \begin{align*}
    \cO_N : = \int_0^1\sum_{\vect{h}\in \Z^m } \eta(\vect{r},\vect{h})\mathfrak{K}_{\vect{u}}(\vect{\mu}) \mathfrak{N}_{\vect{q}}\left(\vect{\mu},s\right) A_{\vect{h},\vect{r}}(s) e\left( \varphi_{\vect{h},\vect{r}}(s)\right) \mathrm{d}s
  \end{align*}
  which requires exploiting the $s$ integral. We write the new amplitude function as 
  \begin{align*}
    &\wt{A}_{\vect{h},\vect{r}}(s) : =     \frac{(\mu_1\mu_2\cdots \mu_m)^{\Theta/2}}{\sqrt{\Phi_{\mu\mu}(\vect{\mu},\vect{r})}}\mathfrak{K}_{\vect{u}}(\vect{\mu}) \mathfrak{N}_{\vect{q}}\left(\vect{\mu},s\right)\wh{f}\left(\frac{\vect{\mu}}{N}\right) .
  \end{align*}
  Further write
  \begin{align*}
    &\cO_N \ll \sum_{\vect{h}\in \Z^m } \eta(\vect{r},\vect{h}) I(\vect{h},\vect{r})\mathrm{d}, \qquad \mbox{ where } \qquad I(\vect{h},\vect{r}):= \int_0^1 \wt{A}_{\vect{h},\vect{r}}(s) e\left( \varphi_{\vect{h},\vect{r}}(s)\right) \mathrm{d}s.
  \end{align*}
  By relabeling and redefining variables, we may write 
  \begin{align*}
    \varphi_{\vect{h},\vect{r}}(s)  = \sum_{\ell \le l} cr_\ell (h_\ell -s)^{1/\theta}  - \sum_{l < \ell \le L} c r_{\ell}(h_\ell -s)^{1/\theta}
  \end{align*}
  where $L\le m$ and $h_\ell$ are pairwise distinct. Now the following proposition establishes a bound for $I$.
  
  \begin{proposition}\label{prop:int s}
    Let $\varphi$ be as above, then 
    \begin{align}\label{exp varphi}
      I(\vect{h},\vect{r}) \ll \mathfrak{K}_{\vect{u}}(\vect{\mu}_0) \mathfrak{N}_{\vect{q}}\left(\vect{\mu}_0,s\right) \frac{e^{u_1+\dots+u_m}}{(r_1r_2\cdots r_m)^{(1-\Theta)/2}}\max_{t\le L} \left(e^{-u_t}e^{\theta((L-1)q_t+\sum_{t\neq \ell\leq L}q_\ell)}\prod_{\substack{\ell\le L\\\ell \neq t}} \abs{h_{\ell} - h_t}^{-1} \right)^{1/L}.
    \end{align}
    as $N \to \infty$. Where $\mu_{0,i} := r_i(h_i/\beta\Theta)^{1/(\Theta-1)}$, that is $\mu_i$ with $s=0$. Where the implicit constants do not depend on in $\vect{h}$ or $\vect{r}$ provided $\eta_{\vect{r}}(\vect{h})\neq 0$.
  \end{proposition}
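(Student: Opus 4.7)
The plan is to bound $I(\vect h,\vect r)$ by applying (a localised form of) Lemma \ref{lem: van der Corput's lemma} with differentiation order $i=L$, after first peeling off the amplitude. On the support of $\mathfrak{K}_{\vect u}\mathfrak{N}_{\vect q}$ one has $\mu_i\asymp e^{u_i}$ and $|\phi_{\mu\mu}(\mu_i,r_i)|\asymp e^{-u_i+\theta q_i}$, and since $h_i-s\asymp h_i$ uniformly for $s\in[0,1]$, both $\wt A_{\vect h,\vect r}$ and its derivatives are comparable to the $s$-independent prefactor $\mathfrak{K}_{\vect u}(\vect\mu_0)\mathfrak{N}_{\vect q}(\vect\mu_0,s)\,e^{u_1+\dots+u_m}/(r_1\cdots r_m)^{(1-\Theta)/2}$, which therefore pulls out of the estimate.

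The analytic core is a uniform-in-$s$ lower bound on some derivative of the phase. For $1\le k\le L$, termwise differentiation gives
\[
\varphi^{(k)}_{\vect h,\vect r}(s)=c_k\sum_{\ell=1}^L \sigma_\ell r_\ell(h_\ell-s)^{1/\theta-k}
\]
with $c_k\neq 0$. Setting $B_\ell:=\sigma_\ell r_\ell(h_\ell-s)^{1/\theta}$ and $T_\ell:=(h_\ell-s)^{-1}$, the identities $\varphi^{(k)}(s)/c_k=\sum_\ell B_\ell T_\ell^{k}$ form a Vandermonde-type linear system in $(B_\ell)$, invertible because the $h_\ell$'s and hence the $T_\ell$'s are pairwise distinct. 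Cramer's rule writes each $B_t$ as a linear combination of the $\varphi^{(k)}(s)$ with coefficients of size $O_L(1)/(|T_t|\prod_{j\neq t}|T_t-T_j|)$, the $O_L(1)$ absorbing the elementary symmetric polynomials in the $T_j$, which are bounded because $|T_j|\asymp e^{-\theta q_j}\ll 1$ on the non-degenerate range. Substituting $|B_t|\asymp e^{u_t+\theta q_t}$, $|T_t|\asymp e^{-\theta q_t}$ and $|T_t-T_j|\asymp|h_t-h_j|/e^{\theta(q_t+q_j)}$ this rearranges to
\[
\max_{1\le k\le L}\bigl|\varphi^{(k)}_{\vect h,\vect r}(s)\bigr|\gg_L W_t:=e^{u_t-\theta((L-1)q_t+\sum_{\ell\neq t}q_\ell)}\prod_{\ell\neq t}|h_\ell-h_t|,
\]
uniformly in $s\in[0,1]$ and in $t\le L$.

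To turn this pointwise ``max of derivatives is large'' statement into a genuine van der Corput estimate, I decompose $[0,1]$ into $O_L(1)$ sub-intervals on each of which a single $k$ realises the max and $\varphi^{(k+1)}$ keeps constant sign. The sub-intervals are few in number because each $\varphi^{(k)}$, being a linear combination of the $L$ functions $(h_\ell-\cdot)^{1/\theta-k}$ with distinct nodes $h_\ell$, has at most $O_L(1)$ zeros in $[0,1]$ (by iterated Rolle applied to the non-vanishing Wronskian of that family, noting $\eta(\vect r,\vect h)\neq 0$ rules out the identically-zero case). Lemma \ref{lem: van der Corput's lemma} applied with order $i=k$ on each piece yields a saving of $W_t^{-1/k}$, which we estimate from above by $W_t^{-1/L}$ in the non-trivial regime $W_t\ge 1$ (the range $W_t<1$ being handled by the trivial bound from $|[0,1]|=1$). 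Summing the $O_L(1)$ contributions and taking the $\max$ over $t\le L$ delivers the stated bound.

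The main obstacle is the last step: propagating the pointwise ``max of derivatives'' lower bound into a uniform van der Corput estimate with the clean exponent $-1/L$, which hinges on the $O_L(1)$ bound on the number of zeros of each $\varphi^{(k)}$. A subsidiary technicality is the bookkeeping of the inverse Vandermonde entries, which must absorb the elementary symmetric polynomials in the $T_j$'s without loss in the final power of $W_t$.
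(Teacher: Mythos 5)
Your argument is essentially the paper's own proof: you invert the same scaled Vandermonde system (the paper uses the explicit inverse of Lemma \ref{lem: Van der Mond inverse} where you use Cramer's rule) to get the pointwise lower bound $\max_{k\le L}|\varphi^{(k)}(s)|\gg W_t$, estimate the amplitude by the same $s$-independent prefactor, and then convert the derivative bound into the $\lambda^{-1/L}$ saving via a localized van der Corput estimate, which the paper simply quotes as Lemma \ref{lem: localised Van der Corput} (from Technau--Yesha) while you sketch its standard subdivision proof, relying on the same $O_L(1)$ zero-count for the derivatives. So the proposal is correct and follows the same route as the paper.
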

To prove Proposition \ref{prop:int s} we aim to show that, at least one of the first $j$ derivatives $\varphi^{(j)}$ is of size $N^{1-(j-1)\theta}$. Then we can use van der Corput's lemma to gain an absolute power of $N$. Importantly, note that $\varphi$ is only zero function when $\eta(\vect{r},\vect{h}) =0$.

The first $L$-derivatives are simultaneously small if 
\begin{equation}
  a_{j}(s):=\varphi^{(j)}(s)=\sum_{\ell\leq l}c r_{\ell}{1/\theta\choose j}(h_{\ell}-s)^{1/\theta-j}-\sum_{l<\ell\leq L}c r_{\ell}{1/\theta\choose j}(h_{\ell}-s)^{1/\theta-j}\label{eq: vector w_s}
\end{equation}
is in a small interval, say, $[-N^{\delta},N^{\delta}]$. We will show that this cannot happen for $\delta>0$ sufficiently large to achieve \eqref{exp varphi}. To that end, recast (\ref{eq: vector w_s}) as the matrix-vector equation $\mathbf{a}=M\mathbf{b}$ in $\mathbb{R}^{L}$ where
\begin{equation}
  a_{\ell}:=a_{\ell}(s)\quad(\ell\leq j),\qquad m_{j,\ell}:={1/\theta\choose j}(h_{\ell}-s)^{-j}\label{def: a_ell and matrix M}
\end{equation}
and
\begin{align*}
  b_{\ell}:=\begin{cases}
  c r_{\ell}(h_{\ell}-s)^{1/\theta}, & \mathrm{if}\,\ell\leq l,\\
  -c r_{\ell}(h_{\ell}-s)^{1/\theta}, & \mathrm{if}\,l<\ell\leq L.
  \end{cases}
\end{align*}

The key idea is to show that the spectral norm 
$\Vert \cdot \Vert_{\mathrm{spec}}$ of $M^{-1}$, 
i.e. the operator norm induced by the the Euclidean norm $\Vert\cdot\Vert_{2}$, 
is not to large. Once this is done we can argue via
\begin{equation}
\mathbf{b}=M^{-1}\mathbf{a}\Longrightarrow\left\Vert \mathbf{b}\right\Vert 
\leq\left\Vert M^{-1}\right\Vert _{\mathrm{spec}}\left\Vert \mathbf{a}\right
\Vert _{2}\Longrightarrow\left\Vert \mathbf{a}\right\Vert _{2}
\geq\frac{\left\Vert \mathbf{b}\right\Vert }
{\Vert M^{-1}\Vert_{\mathrm{spec}}}\label{eq: lower bound on norm of vector b}.
\end{equation}
Because the components of vector $\mathbf{b}$ have size 
\[
\approx\max_{\ell\leq L} r_{\ell}(h_{\ell}-s)^{1/\theta}\asymp
\max_{\ell\leq L}e^{\theta q_\ell +  u_\ell}
\]
this will be enough to show that choosing $\delta\approx1-L\theta>0$
we cannot have $\mathbf{a}\in[-N^{\delta},N^{\delta}]^{L}$. 

\begin{lemma}
\label{lem: Van der Mond inverse}Let $\tau_{1},\ldots,\tau_{L}$
be distinct real numbers, and 
\[
V:=V(\tau_{1},\ldots,\tau_{L}):=\begin{pmatrix}\tau_{1} & \ldots & \tau_{L}\\
\vdots & \ddots & \vdots\\
\tau_{1}^{L} & \ldots & \tau_{L}^{L}
\end{pmatrix}.
\]
Then $V$ is invertible and $V^{-1}=:(v_{t,T})_{t,T\leq L}$ satisfies
\[
v_{t,T}=(-1)^{T-1}\biggl(\tau_{t}\prod_{\underset{l\neq t}{l\leq L}}(\tau_{l}-\tau_{t})\biggr)^{-1}\sum_{\underset{\ell_{1},\ell_{2},\ldots,\ell_{L-T}\neq l}{\ell_{1}<\ell_{2}<\ldots<\ell_{L-T}\leq L}}\tau_{\ell_{1}}\ldots\tau_{\ell_{L-T}}.
\]
\end{lemma}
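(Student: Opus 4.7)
The approach is clean: factor $V$ into a standard Vandermonde matrix times a diagonal matrix, invert each piece separately, and combine. The only real work is keeping sign conventions straight at the end. Concretely, let $W$ be the $L \times L$ matrix with $W_{i,\ell}=\tau_{\ell}^{i-1}$ and set $D:=\mathrm{diag}(\tau_{1},\ldots,\tau_{L})$. The identity $\tau_{\ell}^{i}=\tau_{\ell}\cdot\tau_{\ell}^{i-1}$ gives the factorisation $V=WD$. Since the $\tau_{\ell}$ are distinct, $\det W=\prod_{i<j}(\tau_{j}-\tau_{i})\neq 0$; and invertibility of $V$ forces $\tau_{\ell}\neq 0$ for every $\ell$, so $D$ is invertible as well. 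Thus $V^{-1}=D^{-1}W^{-1}$, reducing the problem to computing $W^{-1}$.

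The entries of $W^{-1}$ come out of Lagrange interpolation. Observing that $W^{T}$ sends a coefficient vector $(a_{1},\ldots,a_{L})$ to the value vector $(p(\tau_{1}),\ldots,p(\tau_{L}))$ of the polynomial $p(x)=\sum_{j=1}^{L} a_{j}x^{j-1}$, its inverse recovers the coefficients from the values. Applying the Lagrange formula
\[
p(x)=\sum_{i=1}^{L} b_{i}\prod_{k\neq i}\frac{x-\tau_{k}}{\tau_{i}-\tau_{k}},
\]
expanding $\prod_{k\neq i}(x-\tau_{k})=\sum_{m=0}^{L-1}(-1)^{L-1-m}e_{L-1-m}(\{\tau_{k}\}_{k\neq i})\,x^{m}$, and reading off the coefficient of $x^{T-1}$ yields
\[
(W^{-1})_{t,T}=\frac{(-1)^{L-T}\,e_{L-T}(\{\tau_{k}\}_{k\neq t})}{\prod_{k\neq t}(\tau_{t}-\tau_{k})}.
\]
Here $e_{j}$ is the elementary symmetric polynomial of degree $j$, which is exactly the sum displayed in the claimed formula.

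To finish, multiply on the left by $D^{-1}$, giving $(V^{-1})_{t,T}=\tau_{t}^{-1}(W^{-1})_{t,T}$, and flip every factor in the denominator using $\prod_{k\neq t}(\tau_{t}-\tau_{k})=(-1)^{L-1}\prod_{k\neq t}(\tau_{k}-\tau_{t})$. The combined sign reduces to $(-1)^{L-T-(L-1)}=(-1)^{T-1}$, which matches the statement exactly. The substantive input is Lagrange interpolation; the only step to execute carefully is the sign and index bookkeeping in this last identification.
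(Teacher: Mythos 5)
Your proof is correct and follows the route the paper itself indicates (its proof is just the remark that $V$ is a scaled Vandermonde matrix): you factor $V=WD$ with $W$ the standard Vandermonde matrix and $D=\mathrm{diag}(\tau_1,\ldots,\tau_L)$, invert $W$ via Lagrange interpolation, and track the sign $(-1)^{L-T-(L-1)}=(-1)^{T-1}$, which all checks out. One tiny wording fix: since invertibility of $V$ is part of the conclusion, you should not deduce $\tau_\ell\neq 0$ from it; rather note that $\tau_\ell\neq 0$ is an implicit hypothesis (the stated formula already contains $\tau_t^{-1}$, and in the application $\tau_\ell=(h_\ell-s)^{-1}\neq 0$), whence $\det V=\bigl(\prod_{\ell\leq L}\tau_\ell\bigr)\prod_{i<j}(\tau_j-\tau_i)\neq 0$.
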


\begin{proof}
Linear Algebra. Note that $V$ is essentially a scaled Vandermonde matrix.
\end{proof}
With this lemma at hand, we have

\begin{lemma}\label{lem: operator norm bound}
  If $M$ is given by (\ref{def: a_ell and matrix M}), then 
  \[
  \Vert M^{-1}\Vert_{\mathrm{spec}}
  \ll      
e^{\theta ((L-1)q_t + \sum_{\ell\leq L}q_\ell))}
\max_{\ell\neq t\leq L}\bigg(  
\prod_{\underset{\ell\neq t}{\ell\leq L}}
\abs{h_{\ell}-h_t}^{-1}\bigg).
  \]
\end{lemma}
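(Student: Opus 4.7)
The plan is to exploit the algebraic structure of $M$: we factor it through a scaled Vandermonde matrix, then apply Lemma \ref{lem: Van der Mond inverse} entry-wise and pass from the Frobenius-style bound to the spectral norm, using that $L\le m$ is bounded.

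First I would set $\tau_\ell:=(h_\ell-s)^{-1}$ for $\ell\le L$. Then $m_{j,\ell}=\binom{1/\theta}{j}\tau_\ell^{\,j}$, so $M=D\cdot V$, where $D:=\mathrm{diag}\bigl(\binom{1/\theta}{j}\bigr)_{j\le L}$ and $V:=(\tau_\ell^{\,j})_{j,\ell\le L}$ is exactly the matrix appearing in Lemma \ref{lem: Van der Mond inverse}. Since $\theta$ is fixed and $L\le m$ is bounded, the diagonal entries of $D$ are nonzero absolute constants, hence $\Vert D^{-1}\Vert_{\mathrm{spec}}=O(1)$ and it suffices to bound $\Vert V^{-1}\Vert_{\mathrm{spec}}$.

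Next I would read off the relevant sizes. The support condition imposed by $\mathfrak{N}_{q_\ell}$ inside $\wt{A}_{\vect{h},\vect{r}}(s)$ forces $(h_\ell-s)^{1/\theta}\asymp e^{q_\ell}$, i.e.\ $|h_\ell-s|\asymp e^{\theta q_\ell}$, uniformly in $s\in[0,1]$. Consequently $|\tau_\ell|\asymp e^{-\theta q_\ell}$ and
\begin{align*}
|\tau_l-\tau_t| \;=\; \frac{|h_t-h_l|}{|(h_l-s)(h_t-s)|} \;\asymp\; |h_t-h_l|\,e^{-\theta(q_l+q_t)}.
\end{align*}
Plugging into Lemma \ref{lem: Van der Mond inverse}, the elementary symmetric polynomial $\sum_{\ell_1<\dots<\ell_{L-T}}\tau_{\ell_1}\cdots\tau_{\ell_{L-T}}$ is $O(1)$ since each $|\tau_\ell|\ll 1$ and $L$ is bounded. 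Thus for every $t,T\le L$,
\begin{align*}
|v_{t,T}| \;\ll\; |\tau_t|^{-1}\prod_{l\neq t}|\tau_l-\tau_t|^{-1} \;\ll\; e^{\theta q_t}\cdot e^{\theta\sum_{l\neq t}(q_l+q_t)}\prod_{l\neq t}|h_l-h_t|^{-1} \;=\; e^{\theta\bigl((L-1)q_t+\sum_{l\le L}q_l\bigr)}\prod_{l\neq t}|h_l-h_t|^{-1}.
\end{align*}

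Finally, I would pass from entries to the spectral norm via $\Vert V^{-1}\Vert_{\mathrm{spec}}\le L\max_{t,T}|v_{t,T}|$, which yields exactly the claimed bound (with the $t$ on the right hand side being the one realising the maximum). The only content-bearing step is the recognition of the Vandermonde structure; everything else is bookkeeping of exponential factors, and the main (minor) obstacle is simply keeping the arithmetic of the exponents straight, in particular combining the factor $e^{\theta q_t}$ coming from $\tau_t^{-1}$ with the $(L-1)$-fold factor coming from the differences $\tau_l-\tau_t$.
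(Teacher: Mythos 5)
Your proof is correct and follows essentially the same route as the paper: factor $M$ through the scaled Vandermonde matrix $(\tau_\ell^{\,j})_{j,\ell\le L}$ with $\tau_\ell=(h_\ell-s)^{-1}$, absorb the bounded diagonal of binomial coefficients, apply Lemma \ref{lem: Van der Mond inverse}, use $|\tau_\ell-\tau_t|\gg e^{-\theta(q_\ell+q_t)}|h_\ell-h_t|$ together with $|\tau_\ell|\ll 1$, and pass from entries to the spectral norm via the bounded dimension $L\le m$. Your exponent bookkeeping, $\theta q_t+\theta\sum_{l\neq t}(q_l+q_t)=\theta\bigl((L-1)q_t+\sum_{l\le L}q_l\bigr)$, matches the stated bound (and your placement of the diagonal factor on the left, scaling rows by $\binom{1/\theta}{j}$, is the correct reading of the factorization).
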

\begin{proof}
  Let us decompose $M$ via $ M=M_{\mathrm{Van}}M_{\mathrm{diag}}$ where 
  \[
  M_{\mathrm{Van}}:=((h_{\ell}-s)^{-j})_{j,\ell\leq L},\qquad M_{\mathrm{diag}}:=\mathrm{diag}\left(\begin{pmatrix}\vartheta\\
1
\end{pmatrix},\ldots,\begin{pmatrix}\vartheta\\
L
\end{pmatrix}\right)
\]
(with $\mathrm{diag}$ denoting a diagonal matrix). Clearly, 
\[
\Vert M^{-1}\Vert_{\mathrm{spec}}\ll
\Vert M_{\mathrm{Van}}^{-1}\Vert_{\mathrm{spec}}.
\]
Taking $\tau_{\ell}:=(h_{\ell}-s)^{-1} < 1$ 
in Lemma \ref{lem: Van der Mond inverse}
and bounding the spectral norm by the maximum norm,  
\begin{align*}
\Vert M^{-1}\Vert_{\mathrm{spec}} & 
\ll\max_{t,T\leq L}\biggl(\tau_{t}
\prod_{\underset{\ell\neq t}{\ell\leq L}}
(\tau_{\ell}-\tau_{t})\biggr)^{-1}\sum_{\underset{\ell_{1},\ell_{2},\ldots,\ell_{L-T}
\neq\ell}{\ell_{1}<\ell_{2}<\ldots<\ell_{L-T}\leq L}}
\tau_{\ell_{1}}\ldots\tau_{\ell_{L-T}}\\
 & \ll\max_{t\leq L}\biggl(\tau_{t}
 \prod_{\underset{\ell\neq t}{\ell\leq L}}(\tau_{\ell}-\tau_{t})\biggr)^{-1}.
\end{align*}
Notice that $h_\ell \asymp e^{q_\ell \theta}$ and
\[
\left|\tau_{\ell}-\tau_{t}\right|=
\left|\frac{h_{\ell}-h_{t}}{(h_{\ell}-s)(h_{t}-s)}
\right|\gg e^{-\theta(q_\ell + q_t)}\abs{h_{\ell}-h_t}.
\]
Consequently, 
\[
\Vert M^{-1}\Vert_{\mathrm{spec}}\ll    
\max_{t\leq L}\bigg( 
e^{\theta ((L-1)q_t + \sum_{t\neq \ell\leq L}q_\ell)} 
\prod_{\underset{\ell\neq t}{\ell\leq L}}
\abs{h_{\ell}-h_t}^{-1}\bigg)
\]
as required.
\end{proof}

The following lemma is a direct result of van der Corput's lemma with an amplitude function (see for example \cite[Lemma 5.1.4]{Huxley1996}, the details of the proof can be found in \cite[Lemma 3.3]{TechnauYesha2020}

\begin{lemma}[localized van der Corput's lemma]
\label{lem: localised Van der Corput}Let $\mathcal{J}$ be a compact
interval. Let $\varphi:\mathcal{J}\rightarrow\mathbb{R}$ be a smooth
function, let $g$ be a real, differentiable function, and
\[
\mathrm{Van}_{\varphi,L}(s):=\max_{i\leq L}\vert\varphi^{(i)}(s)\vert.
\]
If $\varphi^{(L)}$ has at most $C$ zero on $\mathcal{J}$ and $\lambda>0$
is so that 
\[
\mathrm{Van}_{\varphi,L}(s)\geq\lambda
\]
holds throughout $\mathcal{J}$, then 
\[
\int_{\mathcal{J}}\,g(s) e(\varphi(s))\,\mathrm{d}s\ll V(g)\lambda^{-\frac{1}{L}},
\]
where $V(g)$ is the total variation of $g$ plus the value of $g$ at either endpoint of $\cJ$.
\end{lemma}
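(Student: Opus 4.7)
The plan is to reduce the claim to the classical van der Corput bound (Lemma~\ref{lem: van der Corput's lemma}) by partitioning $\mathcal{J}$ into $O_{L,C}(1)$ subintervals on each of which (i) $\varphi''$ has constant sign, and (ii) a single derivative $\varphi^{(i)}$ is pointwise bounded below by $\lambda$ in absolute value. The hypothesis $\mathrm{Van}_{\varphi,L}(s)\geq\lambda$ forces such a decomposition to exist, while the zero-count bound on $\varphi^{(L)}$ keeps the number of pieces controlled.

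First, I would split $\mathcal{J}$ into at most $C+1$ intervals on which $\varphi^{(L)}$ has constant sign; on each such interval $\varphi^{(L-1)}$ is therefore monotonic. Proceeding inductively, on an interval where $\varphi^{(i)}$ is monotonic, split once more at the at-most-one zero of $\varphi^{(i)}$ so that $|\varphi^{(i)}|$ becomes monotonic; this makes $\varphi^{(i-1)}$ convex/concave and, after one further split at its critical point (if any), monotonic on each subpiece. Iterating this procedure down to $i=2$ produces a partition $\mathcal{J}=\bigsqcup_{\alpha}I_{\alpha}$ into $O_{L,C}(1)$ intervals on which every $\varphi^{(i)}$, $1\leq i\leq L$, is monotonic; in particular, $\varphi''$ has constant sign on each $I_{\alpha}$, which is exactly the monotonicity hypothesis required by Lemma~\ref{lem: van der Corput's lemma}.

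Second, on each $I_{\alpha}$, define the level sets $I_{\alpha,i}:=\{s\in I_{\alpha}:|\varphi^{(i)}(s)|\geq\lambda\}$. Since $\varphi^{(i)}$ is monotonic on $I_{\alpha}$, each $I_{\alpha,i}$ is a single (possibly empty) subinterval. The hypothesis $\mathrm{Van}_{\varphi,L}\geq\lambda$ ensures $I_{\alpha}=\bigcup_{i=1}^{L}I_{\alpha,i}$, so one can further subdivide $I_{\alpha}$ into at most $2L-1$ intervals $J_{\alpha,\beta}$, each contained in some $I_{\alpha,i_{\alpha,\beta}}$. On $J_{\alpha,\beta}$ the hypotheses of Lemma~\ref{lem: van der Corput's lemma} are met with exponent $i_{\alpha,\beta}$, giving
\[
\bigg|\int_{J_{\alpha,\beta}}g(s)\,e(\varphi(s))\,\mathrm{d}s\bigg|
\ll\bigl(|g(\mathrm{endpoint})|+V_{J_{\alpha,\beta}}(g)\bigr)\lambda^{-1/i_{\alpha,\beta}}.
\]
Under the harmless assumption $\lambda\leq 1$ (otherwise the bound is trivial), one has $\lambda^{-1/i_{\alpha,\beta}}\leq\lambda^{-1/L}$; summing over $\alpha,\beta$, using subadditivity of total variation together with the uniform bound $O_{L,C}(1)$ on the number of pieces, yields $V(g)\lambda^{-1/L}$, which is the desired estimate.

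The main obstacle is the combinatorial bookkeeping of step two: one must verify that simultaneously arranging monotonicity of all lower derivatives, and then splitting according to the dominant derivative, really does produce an absolutely bounded number of pieces (depending only on $L$ and $C$). Since absolute constants are unimportant here, the counting can be carried out wastefully via repeated applications of Rolle's theorem; the full bookkeeping appears in \cite[Lemma~5.1.4]{Huxley1996} and \cite[Lemma~3.3]{TechnauYesha2020}.
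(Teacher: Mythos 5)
Your overall strategy is the standard one and is essentially what the references cited by the paper carry out (the paper itself gives no proof of this lemma, it only points to Huxley and Technau--Yesha): split $\mathcal{J}$ by Rolle-type arguments into $O_{L,C}(1)$ pieces on which every derivative up to order $L$ is monotone and sign-constant, then on each piece isolate a single derivative dominating $\lambda$ and invoke the classical van der Corput lemma with amplitude. That skeleton is sound, and the bookkeeping of the endpoint/variation terms is fine.

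However, the final step as written is wrong, and in the direction that matters. For $i\leq L$ one has $\lambda^{-1/i}\leq\lambda^{-1/L}$ precisely when $\lambda\geq 1$, not when $\lambda\leq 1$: e.g.\ $\lambda=1/4$, $L=2$, $i=1$ gives $\lambda^{-1}=4>2=\lambda^{-1/2}$. So under your assumption $\lambda\leq 1$, a piece on which only $|\varphi'|\geq\lambda$ is available produces the bound $V(g)\lambda^{-1}$, which is \emph{larger} than the target $V(g)\lambda^{-1/L}$, and the summation step fails. You also have the triviality the wrong way around: the case $\lambda>1$ is not trivial (it is exactly the regime needed in the paper, where $\lambda$ is a positive power of $N$), whereas the case $\lambda\leq1$ is the trivial one, since then $\lambda^{-1/L}\geq1$ and $\bigl|\int_{\mathcal{J}}g\,e(\varphi)\bigr|\leq|\mathcal{J}|\sup|g|\ll V(g)\leq V(g)\lambda^{-1/L}$ (constant depending on $|\mathcal{J}|$, harmless here). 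Swapping the two cases — trivial estimate for $\lambda\leq1$, piecewise van der Corput plus $\lambda^{-1/i}\leq\lambda^{-1/L}$ for $\lambda\geq1$ — repairs the proof with no other change. A minor further slip: on a piece $I_\alpha$ the set $I_{\alpha,1}=\{|\varphi'|\geq\lambda\}$ need not be a single interval, since $\varphi'$ is monotone but not necessarily sign-constant after your iteration stops at $i=2$; it is a union of at most two intervals, which only changes the $O_{L,C}(1)$ count and is harmless.
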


\begin{proof}[Proof of Proposition \ref{prop:int s}]
      Combining (\ref{eq: lower bound on norm of vector b}) and Lemma \ref{lem: operator norm bound}
yields 
\[
\mathrm{Van}_{\varphi,L}(s)\gg \max_{t\le L}\left(e^{u_t}e^{\theta((L-1)q_t+ \sum_{t\neq \ell\leq L}q_\ell)}\prod_{\substack{\ell\le L\\\ell \neq t}} \abs{h_{\ell} - h_t}^{-1} \right)  N^{1+2(1-L)\theta}.
\]
The derivatives of the phase function $\varphi$ have a uniformly
bounded number zeros (independent of $\boldsymbol{h}$ and $\boldsymbol{r}$).
Thus Lemma \ref{lem: localised Van der Corput} applies and we infer
\begin{align*}
  I(\vect{h},\vect{r})&\ll\mathfrak{K}_{\vect{u}}(\vect{\mu}_0) \mathfrak{N}_{\vect{q}}\left(\vect{\mu}_0,s\right) \frac{e^{u_1+\dots+u_m}}{(r_1r_2\cdots r_m)^{(1-\Theta)/2}}\max_{t\le L} \left(e^{-u_t}e^{\theta((L-1)q_t+\sum_{t\neq \ell\leq L}q_\ell)}\prod_{\substack{\ell\le L\\\ell \neq t}} \abs{h_{\ell} - h_t}^{-1} \right)^{1/L}.
\end{align*}
\end{proof}

\section{Proof of Lemma \ref{lem:MP = KP non-isolating}}

As demonstrated above, by extracting the various main terms and applying the $B$-process we conclude that
\begin{align*}
  \cK_{m}(N) &= \left(\lim_{N\to \infty} \sum_{\cP\in \mathscr{P}_m} \cM_{\cP}(N) \right)  +  O\left(\frac{1}{N^m} \sum_{\vect{h},\vect{q},\vect{r},\vect{u}}\eta(\vect{r},\vect{h})  \frac{1}{(r_1r_2\cdots r_m)^{(\Theta+1)/2}}\cO_N\right) + o(1)
\end{align*}
as $N \to \infty$. Inserting the bound \eqref{exp varphi}, we deduce
\begin{align*}
  &\Err := \frac{1}{N^m} \sum_{\vect{h},\vect{q},\vect{r},\vect{u}} \eta(\vect{r},\vect{h})  \frac{1}{(r_1r_2\cdots r_m)^{(\Theta+1)/2}}\cO_N\\
  &\ll \frac{1}{N^m} \sum_{\vect{h},\vect{q},\vect{r},\vect{u}}\eta(\vect{r},\vect{h})  \frac{e^{u_1+\dots+u_m}}{r_1r_2\cdots r_m} \mathfrak{K}_{\vect{u}}(\vect{\mu}_0) \mathfrak{N}_{\vect{q}}\left(\vect{\mu}_0,s \right) \max_{t\le m} \left(e^{-u_t}e^{\theta((m-1)q_t+\sum_{t\neq \ell\leq L}q_\ell)}\prod_{\substack{\ell\le m\\\ell \neq t}} \abs{h_{\ell} - h_t}^{-1} \right)^{1/m}.
\end{align*}
Recall that $\mu_0 = r_i \left(\frac{h_i}{\beta \Theta}\right)^{1/(\Theta-1)}$, thus, the condition imposed by $\sum_{\vect{q}}\mathfrak{N}_{\vect{q}}\left(\vect{\mu}_0,s\right) $ implies that $h_i \ll N^{\theta}$. Now we can bound the sum over $\vect{h}$ by using a generalized version of H\"{o}lder's inequality. That is we fix exponents $1/p_1 +1/p_2 \dots  + 1/p_{m-1}=1$. In this case, choose $p_i = m$ for $i \le m-2$ and $p_{m-1}=m/2$ 
\begin{align*}
  \sum_{\vect{h}} \prod_{\substack{\ell\le m\\\ell \neq t}} \abs{h_{\ell} - h_t}^{-1/m}
  &\ll \sum_{t =1}^m \sum_{\substack{h_i\\i \neq t}}\left(\sum_{h_t} \prod_{\substack{\ell\le m\\\ell \neq t}}\abs{h_{\ell} - h_t}^{-1/m}\right)\\
  &\ll m \sum_{\substack{h_i\\i>  1}} \left\{\left(\sum_{h_1} \abs{h_{m-1} - h_1}^{-1/2}\right)^{2/m} \prod_{\ell=2}^{ m-1}  \left(\sum_{h_1}\abs{h_{\ell} - h_1}^{-1}\right)^{1/m}\right\} \\
  &\ll \log(N)^{\frac{m-2}{m}} N^{\theta(m-1)}  N^{\theta/m} \ll N^{\theta((m-1)+1/m)+\varepsilon} .
\end{align*}
Thus
\begin{align*}
  \Err   &\ll N^{\frac{(m^2+m-1)\theta - 1}{m}+\varepsilon}.
\end{align*}
Hence, if $ \theta < 1/(m^2+m-1) $, and $\varepsilon>0$ is taken small enough, then $\Err = o(1)$. From there, the decomposition at the start of Section \ref{s:Triple Correlation} and a standard approximation argument are enough to establish Theorem \ref{thm:main}.
\qed

   \small 
   \section*{Acknowledgements}
   NT was supported by a Schr\"{o}dinger Fellowship of the Austrian Science Fund (FWF): project J 4464-N. We thank Zeev Rudnick for comments on an earlier draft of the paper.

  \bibliographystyle{alpha}
  \bibliography{biblio}

    \hrulefill

    \vspace{4mm}
     \noindent Department of Mathematics, Rutgers University, Hill Center - Busch Campus, 110 Frelinghuysen Road, Piscataway, NJ 08854-8019, USA. \emph{E-mail: \textbf{chris.lutsko@rutgers.edu}}

\vspace{4mm}
     \noindent
Department of Mathematics,
California Institute of Technology,
1200 E California Blvd.,
Pasadena, CA 91125, USA
\emph{E-mail: \textbf{ ntechnau@caltech.edu}}

\end{document}